\newfont{\vlte}{eufm10 at 22pt}
\newfont{\lte}{eufm10 at 18pt}
\newfont{\smb}{msbm6}
\newfont{\mmb}{msbm8}
\newfont{\tmb}{msbm10}
\newfont{\lmb}{msbm10 at 18pt}
\newcommand{\ord}{\mbox{ord}}
\newcounter{hw}
\newcommand{\be}{\begin{enumerate}}
\newcommand{\ee}{\end{enumerate}}
\chardef\secsym=129
\newcommand{\calA}{{\mathcal A}}
\newcommand{\calI}{{\mathcal I}}
\newcommand{\calP}{{\mathcal P}}
\newcommand{\calQ}{{\mathcal Q}}
\newcommand{\calR}{{\mathcal R}}
\newcommand{\calS}{{\mathcal S}}
\newcommand{\calT}{{\mathcal T}}
\newcommand{\calZ}{{\mathcal Z}}
\newcommand{\C}{{\mathbb C}}
\newcommand{\F}{{\mathbb F}}
\newcommand{\Q}{{\mathbb Q}}
\newcommand{\R}{{\mathbb R}}
\newcommand{\Z}{{\mathbb Z}}
\newcommand{\pp}{{\mathfrak p}}
\newcommand{\Pp}{{\mathfrak P}}
\newcommand{\DD}{{\mathfrak D}}
\newcommand{\Aa}{{\mathfrak A}}
\newcommand{\Tt}{{\mathfrak T}}
\newcommand{\qq}{{\mathfrak q}}
\newcommand{\ttt}{{\mathfrak t}}
\chardef\sha=88
\newtheorem{theorem}{Theorem}[section]
\newtheorem{lemma}[theorem]{Lemma}
\newtheorem{corollary}[theorem]{Corollary}
\theoremstyle{definition}
\newtheorem{definition}[theorem]{Definition}
\newtheorem{example}[theorem]{Example}
\newtheorem{construction}[theorem]{Construction}
\theoremstyle{remark}
\newtheorem{remark}[theorem]{Remark}
\newtheorem{notationassumption}[theorem]{Notation and Assumptions}
  \theoremstyle{plain}
\newtheorem{introprop}{Proposition}
\newtheorem{introthm}[introprop]{Theorem}
\begin{document}
\bibliographystyle{plain}%
 \title{Definability and Decidability in Infinite Algebraic Extensions }%
\author{Alexandra Shlapentokh and Carlos Videla}%
\thanks{The first author has been partially supported by NSF grant DMS-0650927 and by a grant from John Templeton Foundation.}
\address{Department of Mathematics \\ East Carolina University \\ Greenville, NC 27858, USA}%
\email{shlapentokha@ecu.edu }
\urladdr{www.personal.ecu.edu/shlapentokha} \subjclass[2000]{Primary 11U05; Secondary 11G05}
\address{Department of Mathematics, Physics and Engineering, Mount Royal University, Calgary, Alberta, Canada T3E 6K6}
\email{cvidela@mtroyal.ca}
 \keywords{First-Order Definability, Infinite Algebraic Extensions}
\begin{abstract}
We use a generalization of a construction by Ziegler to show that for any field $F$ and any countable collection of countable subsets $A_i \subseteq F, i \in \calI \subset \Z_{>0}$  there exist infinitely many fields $K$ of arbitrary positive transcendence degree over $F$ and of infinite algebraic degree  such that each $A_i$ is first-order definable over $K$.  We also use the construction to show that many infinitely axiomatizable theories of fields which are not compatible with the theory of algebraically closed fields are finitely hereditarily undecidable.
\end{abstract}
\date{\today}
\maketitle
\section{Introduction}
In this paper we investigate an interesting definability phenomenon occurring in some extensions of positive transcendence and infinite algebraic degree and derive a number of model theoretic consequences.   While we know a great deal (though far from everything) about first-order definability over number fields and function fields, especially function fields over a finite field of constants, and over fields which are ``close'' to  being algebraically closed, we know substantively less about infinite algebraic extensions of rational fields which are ``far'' from algebraic closure.  (See for example \cite{Da2}, \cite{Dencollection}, \cite{Eispadic}, \cite{JR}, \cite{Jarden2}, \cite{Mate}, \cite{MB3}, \cite{PO4}. \cite{PP}, \cite{Rob1}, \cite{Rob2}, \cite{Rum}, \cite{Rum1}, \cite{Sh34}, \cite{Dries4},\cite{Dries3}, \cite{V1}, \cite{V2}, \cite{V3}   for a description of first-order and existential definability results over number fields and function fields.  This list is very far from being exhaustive and is just supposed to give the reader a sample of the results on the matter.)

The questions of definability are usually considered from the following point of view: given a field or a ring, describe the definable sets.  In this paper we consider a different approach: given a subset of a field (or a countable collection of subsets), describe  field extensions where this subset (or each set in the collection) is definable.  Our construction is a generalization of a construction by Martin Ziegler (see \cite{Ziegler}) which he used to show, among other things,  that $\Z$ is definable in a class of fields.  One of the main results of this paper can be stated as the following theorem.
\begin{introthm}%
Let $F$ be any field.  Then for any countable collection of countable subsets $$A_i \subseteq F, i\in \calI \subset \Z_{>0}$$  there exist infinitely many fields $K$ of arbitrary non-zero transcendence degree  over $F$ and of infinite algebraic degree  such that  each $A_i$ is first-order definable over $K$.
\end{introthm}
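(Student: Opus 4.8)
The plan is to realize the theorem as an instance of a generalized Ziegler construction built on radical (Kummer) extensions. Fix a nonzero cardinal $\kappa$ (the desired transcendence degree), let $\{t_j\}_{j\in J}$ be algebraically independent over $F$ with $|J|=\kappa$, and set $F_0=F(\{t_j\}_{j\in J})$, distinguishing one transcendental $t=t_{j_0}$. We will build $K$ as an infinite algebraic extension of $F_0$, so automatically $\trdeg(K/F)=\kappa$ and $[K:F_0]=\infty$. To each $i\in\calI$ attach a distinct prime $p_i\neq\Char F$ (possible, since $\calI$ is countable and infinitely many primes avoid $\Char F$); after first enlarging $F_0$ by the roots of unity $\mu_{p_i}$ — a harmless move — we may assume Kummer theory is available for every $p_i$.

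For the construction proper, enumerate $\bigcup_{i\in\calI}A_i$ and build $K=\bigcup_n K_n$ as an increasing union: at the stages handling $a\in A_i$ we adjoin a $p_i$-th root of $t-a$, and at the remaining stages we adjoin $\ell$-th roots of a fixed sequence of ``junk'' radicands (other transcendentals, or distinct monic irreducibles of $F[t]$) for primes $\ell\notin\{p_i:i\in\calI\}$, in order to force $[K:F_0]=\infty$ even when all the $A_i$ are finite. The set $A_i$ will be defined in $K$ by a formula of the shape $\psi_i(y)\equiv\vartheta(y)\wedge\exists x\,(x^{p_i}=t-y)$, where $\vartheta$ defines a set $D$ with $A_i\subseteq D\subseteq F$. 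One then checks: \emph{(i)} for $a\in A_i$, $t-a$ is a $p_i$-th power in $K$ (immediate from the adjunctions); \emph{(ii)} for $y\in D\setminus A_i$ it is not — Kummer theory (noting that the junk primes $\ell\neq p_i$ do not alter $K^*/(K^*)^{p_i}$, being coprime to $p_i$) reduces this to whether $t-y$ lies in the subgroup generated by the adjoined radicands, and since the linear polynomials $t-a$ ($a\in A_i$) are pairwise non-associate irreducibles of $F[t]$ they are multiplicatively independent modulo $p_i$-th powers, so a comparison of factorizations disposes of $y\in F\setminus A_i$; \emph{(iii)} $D$ is itself first-order definable in $K$ and squeezed between $A_i$ and $F$. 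Varying the junk radicands (or the assignment $i\mapsto p_i$, or adjoining further independent radicals) then yields infinitely many such fields $K$, which one can even arrange pairwise non-isomorphic.

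The real obstacle is the interplay of \emph{(ii)} and \emph{(iii)}, i.e.\ why the confining clause $\vartheta$ cannot be dropped: the naive formula $\exists x\,(x^{p_i}=t-y)$ does \emph{not} define $A_i$ in $K$, since for \emph{any} $u\in K$ the value $y=t-u^{p_i}$ satisfies $t-y=u^{p_i}\in(K^*)^{p_i}$, and such $y$ generally lie outside $F$ (no rational substitute for $t-y$ avoids this). Thus the construction must also render a suitable piece of the base — a definable $D$ with $A_i\subseteq D\subseteq F$ — first-order definable inside the infinite radical tower $K$ \emph{without} disturbing the control over $p_i$-th powers needed for \emph{(ii)}; achieving both at once is exactly what the generalization of Ziegler's construction is designed to do. A secondary difficulty is bookkeeping the Kummer theory along an infinite tower in positive characteristic, where inseparability and the absence of roots of unity force one to work only with primes $\neq\Char F$ and, where needed, to replace $p$-th power conditions by Artin--Schreier-type conditions.
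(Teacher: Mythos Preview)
Your proposal correctly identifies both the general shape of the argument (an infinite radical tower over $F(t)$, with a distinct auxiliary prime $p_i$ attached to each $A_i$) and the central obstacle: the naive existential formula $\exists x\,(x^{p_i}=t-y)$ is always satisfied by $y=t-u^{p_i}$, so some confining clause $\vartheta$ cutting down to a set $D$ with $A_i\subseteq D\subseteq F$ is mandatory. But the proposal stops precisely at this point; the sentence ``achieving both at once is exactly what the generalization of Ziegler's construction is designed to do'' is not a proof, it is a citation of the very thing that has to be proved. Nothing in your sketch explains \emph{how} to make $D$ (in fact $F$ itself) first-order definable in $K$, and this is by far the hardest part of the argument.

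The paper's resolution differs from your outline in two essential ways. First, both $F$ and each $A_u$ are defined by \emph{universal} formulas, not existential ones: $a\in F$ iff for every $b$, whenever $R_1(a)^{q_1}+R_1(b)$ and $R_1(a)^{q_1}+R_1(b)^{-1}$ are both $q_1$-th powers, then $R_1(b)$ is already a $q_1$-th power; and $r\in A_u$ iff for every decomposition $r=r_1+r_2$ in $F$, at least one of $t^{q_u}-r_1+a_u$, $t^{q_u}-r_2+a_u$ has $R_u$-value a $q_u$-th power. To make these universal statements true, the construction must not only \emph{adjoin} $q_u$-th roots (your step) but also \emph{refuse} to adjoin certain others, and this refusal has to survive all later stages of the tower. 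Second, the bookkeeping that makes this work is the maintenance of finite ``forbidden'' sets $S_{i,u}\subset E_i$ together with witnessing primes $\pp_{s,i,u}$ at which $\ord R_u(s)\not\equiv 0\pmod{q_u}$; two inductive invariants (Conditions~(3.3) and~(3.4) in the paper) guarantee that elements of $S_{i,u}$ can never become $q_u$-th powers in any later $E_j$, and the four-phase loop ($i=4n,\dots,4n+3$) interleaves (a) absorbing harmless finite extensions, (b) deciding, for each $x_n$, whether to adjoin $\sqrt[q_u]{R_u(x_n)}$ or instead freeze $x_n$ into $S_{i,u}$, (c) manufacturing, for each nonconstant $x_n$, a witness $b$ showing $x_n\notin F$ via the universal formula, and (d) manufacturing, for each $x_n\in F\setminus A_u$, a splitting $x_n=r_1+r_2$ witnessing $x_n\notin A_u$. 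None of this machinery is visible in your sketch, and your ``junk radicand'' stages, as described, do nothing to force the universal side of the definitions.

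In short: the outline is on the right track and you have put your finger on the genuine difficulty, but the proposal is not yet a proof because the construction of $\vartheta$ and the simultaneous control of which radicands do and do not acquire $p_i$-th roots is entirely missing.
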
%

While Ziegler's paper had interesting definability results, its primary focus was proving the undecidability of finitely axiomatizable subtheories of various theories.  The main idea which enabled Ziegler to prove the undecidability results can be summarized in the following argument. If $M$ is one of the fields: $L_{p}$ (algebraic closure of a $\F_p(t)$, where $\F_p$ is a finite field of $p$ elements, and $t$ is transcendental over $\F_p$), $\C$, $\R$, or $\Q_p$, then for every rational prime $q$, not equal to the characteristic of the field, there exists a field $K_q \subset M$ with the following properties:
\be
\item if $H$ is a subfield of $M$,  $K_q \subseteq H$, and $[H:K_q] < \infty$, then either $$[H:K_q]=1$$ or $$[H:K_q]\equiv 0 \mod q;$$
\item Each field $K_q$ is strongly undecidable, i.e. any theory for which $K_q$ is a model is undecidable.
\ee
Let $\Omega$ be a non-principal ultra-filter on the set of rational primes, let $$K=\prod_q K_q/\Omega,$$ and let $$\hat M=\prod_qM/\Omega.$$  If $\mathfrak T$ is a finitely axiomatizable theory for which $M$ is a model, then  $\hat M$ and $K$ are also models of $\mathfrak T$, and therefore by {\L}o\u{s}'s Theorem for at least one $q$ we have that  $K_q$ is a model of  $\mathfrak T$, implying that  $\mathfrak T$ is undecidable.
In his paper Ziegler considered among others finitely axiomatizable fragments of the following theories:
\begin{itemize}
\item $\Tt^{A}_{p,q}$ -- the theory of a field of characteristic $p \geq 0$, where every irreducible polynomial is either of degree 1 or of degree divisible by $q$.  (If $p=0$, this theory is compatible with the theory of $\C$, and if $p >0$, then this theory is compatible with the theory of $L_p$.)
\item $\Tt^{R}_2$ -- the theory of a formally real field where all irreducible polynomials have degree 1 or 2.
\item $\Tt^R_q, q \not=2$ -- the theory of a formally real field where the degree of any irreducible polynomial is either 1 or is divisible by $q$, and the field is dense in its real algebraic closure.
 \item $\Tt^H_q, q \not=2$ -- the theory of a formally p-adic field where the degree of any irreducible polynomial that has a zero in the $p$-adic closure is either 1 or is divisible by $q$, and the field is dense in its $p$-adic algebraic closure.
\end{itemize}
\begin{remark}
In general, {\it for any} $M$, given the construction above, if $\mathfrak T$ is a collection of statements that some polynomials have a root in a field while others are irreducible and if $M$ is a model of $\mathfrak T$,  then, as before, $\hat M$ is a model of $\mathfrak T$ by {\L}o\u{s}'s Theorem, $K$ is algebraically closed in $\hat M$ by construction, and because $K$ is algebraically closed in $\hat M$, all the statements of $\mathfrak T$  will be true in $K$.
\end{remark}
It follows form the discussion above that every finite subtheory of the theories listed above is undecidable and therefore all these theories are finitely hereditarily undecidable.  We should also note here that in the case of  positive characteristic the undecidability of each  $K_q$ is obtained by interpreting the theory of graphs in these fields, and in the case of zero  characteristic, $\Z$ is defined over each $K_q$.

Using a generalized version of Ziegler's construction we show the following.

\begin{introthm}
\label{introthm2}
Let $U$ be a countable field, let  $\calQ$ be an infinite set of rational primes not including the prime equal to the characteristic of the field,  let $\calR=\{R_q(T): q \in \calQ\} \subset U(T)$, where all the zeros and poles of $R_q$ are in $U$,  at least one pole or zero of $R_q(T)$ is of degree not divisible by $q$, and for any $x \in U$ there exists $y \in U$ such that $y^q=R_q(x)$.     Let  $\calP=\{P_i(T), i \in \Z_{>0}\} \subset U[T]$ be such that  for any $i \in \Z_{>0}$ we have that $P_i(T)$ is irreducible over $U$ and $P_i$ does not factor in any extension of degree $q \in \calQ$.  Let $\calZ=\{Z_i(T), i \in \Z_{>0}\} \subset U[T]$ be such that  for any $i \in \Z_{>0}$ it is the case that $Z_i(T)$ has a root in $U$. Now if  $\Tt_{U, \calR,\calP,\calZ}$ is a first-order theory of fields in the language with a constant symbol for every element of $U$, consisting of the atomic diagram of $U$ and the following statements:
\be
\item \label{it:2} $\forall q \in \calQ:  (\forall x  \exists y: y^q=R_q(x)$);
\item \label{it:3} $\forall i: (P_i(T)$ is irreducible);
\item \label{it:4} $\forall i: (Z_i(T)$ has a root),
\ee

then any finite subtheory  of $\Tt_{U, \calR,\calP, \calZ}$ is undecidable.
\end{introthm}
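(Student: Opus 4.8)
The plan is to run the ultraproduct argument recalled in the introduction, with the input fields supplied by our generalized version of Ziegler's construction. Fix a finite subtheory $\Tt_0 \subseteq \Tt_{U,\calR,\calP,\calZ}$. Being finite, $\Tt_0$ uses only finitely many constant symbols --- hence refers to only finitely many elements of $U$ --- and contains only finitely many instances of the three schemata \eqref{it:2}, \eqref{it:3}, \eqref{it:4}; in particular only finitely many primes $q \in \calQ$ occur in its instances of \eqref{it:2}, and only finitely many of the $P_i$ and the $Z_i$ occur in it. Let $L_0$ be the finite sublanguage in which $\Tt_0$ is written. The generalized construction --- whose input hypotheses are precisely the conditions imposed here on $\calR$ (some pole or zero of $R_q$ has degree prime to $q$, and $R_q(x)$ is a $q$-th power in $U$ for every $x \in U$) and on $\calP$ ($P_i$ is irreducible over $U$ and does not factor over any extension of degree $q \in \calQ$) --- produces a field $M$ with $M \models \Tt_{U,\calR,\calP,\calZ}$ together with, for each $q \in \calQ$, a subfield $K_q$ of $M$ with $U \subseteq K_q$ such that: (i) every finite subextension of $M/K_q$ has degree $1$ or divisible by $q$; and (ii) $K_q$ is strongly undecidable ($\Z$ being definable in $K_q$ when $\Char U = 0$, and the theory of graphs being interpretable in $K_q$ when $\Char U > 0$).

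Choose a non-principal ultrafilter $\Omega$ on the infinite set $\calQ$ and put $K = \prod_{q \in \calQ} K_q/\Omega \subseteq \hat M = \prod_{q \in \calQ} M/\Omega$. The first key point is that $K$ is relatively algebraically closed in $\hat M$. Indeed, if some $\alpha \in \hat M$ were algebraic over $K$ of degree $d > 1$, then, lifting a degree-$d$ irreducible polynomial over $K$ having $\alpha$ as a root and applying {\L}o\u{s}'s theorem (irreducibility of polynomials of a fixed degree over a field is a first-order property of their coefficients), we would obtain $[K_q(\alpha_q):K_q] = d$ for $\Omega$-almost all $q$; by (i) this forces $q \mid d$ for $\Omega$-almost all $q$, which is absurd since only finitely many $q \in \calQ$ divide $d$ while $\Omega$ is non-principal. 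Hence $d = 1$ and $\alpha \in K$.

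The second key point is that $K \models \Tt_0$. Since $M \models \Tt_{U,\calR,\calP,\calZ}$, {\L}o\u{s}'s theorem gives $\hat M \models \Tt_0$, and it remains to transfer each axiom of $\Tt_0$ down from $\hat M$ to the relatively algebraically closed subfield $K$. The diagonal embedding $U \hookrightarrow K$ (available because $U \subseteq K_q$ for every $q$) preserves all atomic and negated atomic sentences with constants naming elements of $U$, so the part of $\Tt_0$ coming from the atomic diagram of $U$ holds in $K$. An instance of \eqref{it:4} holds in $K$ because $Z_i$ already has a root in $U \subseteq K$, and an instance of \eqref{it:3} holds in $K$ because $P_i \in U[T] \subseteq K[T]$ is irreducible over the overfield $\hat M$, hence over $K$. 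Finally, for an instance $\forall x\, \exists y\,(y^q = R_q(x))$ of \eqref{it:2} --- suitably formalized, the finitely many poles of $R_q$ lying in $U$ and so being cut out by a formula with parameters in $U$ --- given $x \in K$ the element $R_q(x)$ lies in $K$, and any witness $y \in \hat M$ with $y^q = R_q(x)$ is a root of a nonzero polynomial of degree $q$ with coefficients in $K$, hence is algebraic over $K$, hence lies in $K$ by the first key point. Therefore $K \models \Tt_0$.

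To finish, let $\sigma$ be the conjunction of the finitely many sentences of $\Tt_0$. Then $K \models \sigma$, and since $K = \prod_{q \in \calQ} K_q/\Omega$, {\L}o\u{s}'s theorem yields $\{\, q \in \calQ : K_q \models \sigma \,\} \in \Omega$; this set is in particular non-empty, so $K_{q_0} \models \Tt_0$ for some $q_0 \in \calQ$. As $K_{q_0}$ is strongly undecidable --- a property inherited when the language is expanded by the finitely many constants of $L_0$ --- the $L_0$-theory $\Tt_0$, which has $K_{q_0}$ as a model, is undecidable, as desired. I expect the real work to lie not in this bookkeeping but in the generalized construction invoked in the first paragraph, and specifically in arranging that the $q$-th power conditions \eqref{it:2} hold for all $q \in \calQ$ at once while every $P_i$ stays irreducible and every $Z_i$ acquires a root --- this is exactly where the arithmetic hypotheses on $\calR$ and $\calP$ are needed; within the proof of the theorem itself the only delicate step is the descent of the axioms \eqref{it:2} to $K$, which rests on the relative algebraic closedness established in the second paragraph.
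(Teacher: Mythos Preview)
Your argument is the paper's own: build each $K_q$ inside a fixed model $G$ of $\Tt_{U,\calR,\calP,\calZ}$ via the generalized construction, form the ultraproduct $K\subseteq\hat G$, use the degree condition to show $K$ is relatively algebraically closed in $\hat G$, descend the axioms to $K$, and apply {\L}o\u{s} to land in some strongly undecidable $K_{q_0}$. Your axiom-by-axiom descent is correct and in fact more explicit than the paper's one-line appeal to relative algebraic closedness.

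One point to correct: in positive characteristic the paper does \emph{not} obtain strong undecidability of $K_q$ by interpreting graphs---that was Ziegler's original mechanism, mentioned in the introduction only for contrast. The paper instead applies its own definability construction with the choice $A=\F_p[t]$ (as a subset of $F$), so that the polynomial ring becomes first-order definable in $K_q$. This forces a case distinction you omit: when $U$ is algebraic over a finite field there is no copy of $\F_p[t]$ inside $U$, so one cannot take $F=U$; the paper therefore sets $F=U(t)$ with $t$ transcendental over $U$ (and correspondingly requires the ambient model $G$ to have transcendence degree at least two over $U$). Your black-box invocation of the construction should reflect this adjustment.
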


\begin{remark}
In Part \ref{it:3}, the polynomials are listed explicitly.  In  Part \ref{it:4}, the polynomials can be listed explicitly or we may have statements asserting that all polynomials of a certain form (e.g. degree) have a root.
\end{remark}

To prove the theorem above we follow Ziegler's construction except that in the case of positive characteristic we define a polynomial ring inside our fields $K_q$.  In order to do this we need a proposition below.
\begin{introprop}
\label{introprop}
     There exists $G \models \Tt_{U, \calR,\calP, \calZ}$  of any transcendence degree over $U$.
\end{introprop}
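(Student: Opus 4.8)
\noindent\emph{Proof proposal.} The plan is to realize $G$ as an algebraic extension of a purely transcendental extension of $U$, enlarged just enough to be closed under the $q$-th-root operations demanded by the first group of axioms, and then to check that none of the polynomials $P_i$ ever acquires a factorization during this process. Fix the desired transcendence degree $\kappa$; the case $\kappa=0$ is handled by $G=U$, which already satisfies all the stated hypotheses, so assume $\kappa\geq 1$. First I would choose, inside a fixed algebraic closure of $U$, a set $S$ of $\kappa$ elements algebraically independent over $U$, and set $G_0=U(S)$. Since $U$ is a field, $U[S]$ is a UFD and each $P_i\in U[T]$ is primitive over it; by Gauss's lemma together with a comparison of total degrees in the variables of $S$, any factorization of $P_i$ over $U(S)$ descends to one over $U$, so each $P_i$ stays irreducible over $G_0$. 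Every $Z_i$ retains its root in $U\subseteq G_0$, and $\trdeg(G_0/U)=\kappa$.

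Next I would build an increasing chain $G_0\subseteq G_1\subseteq\cdots$ inside $\overline{G_0}$, taking $G_{n+1}$ to be generated over $G_n$ by a $q$-th root of $R_q(x)$ for every $q\in\calQ$ and every $x\in G_n$, chosen to lie in the field constructed so far whenever $R_q(x)$ is already a $q$-th power there. Because each $q$ is prime, $T^q-R_q(x)$ is either irreducible or has a root in the ground field, so with this choice every single adjunction has degree $1$ or $q$; the finitely many poles of each $R_q$ all lie in $U$ and are covered by the standing convention, so they introduce nothing. Let $G=\bigcup_n G_n$. Then $G$ is closed under $x\mapsto$ (a $q$-th root of $R_q(x)$) for each $q\in\calQ$, so $G$ satisfies the first scheme; every $Z_i$ still has a root in $U\subseteq G$, so $G$ satisfies the third; and $G$ models the atomic diagram of $U$ because $U\injects G$ is a field embedding. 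Each $G_{n+1}/G_n$ is algebraic, hence so is $G/G_0$, and therefore $\trdeg(G/U)=\trdeg(G_0/U)=\kappa$.

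The one delicate point — and the step I expect to be the main obstacle — is that every $P_i$ should remain irreducible over $G$. Here I would decompose each extension $G_n\subseteq G_{n+1}$ into a (possibly transfinite) chain of simple steps $L\subseteq L(y)$ with $y^q=R_q(x)$, $x\in L$, $q\in\calQ$, each of degree $1$ or $q$ by the choice of roots above. When the degree is $1$ there is nothing to prove; when it is $q$, the hypothesis that $P_i$ — irreducible over $L$ by induction — does not factor in any extension of degree $q\in\calQ$ shows that $P_i$ is still irreducible over $L(y)$. Irreducibility then survives the limit stages and the final union, since any factorization of $P_i$ over a directed union of fields already occurs over one member of the chain, using only finitely many coefficients. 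Hence $P_i$ is irreducible over $G$ for every $i$, and $G\models\Tt_{U,\calR,\calP,\calZ}$ with $\trdeg(G/U)=\kappa$. The real content is that the primality of the $q\in\calQ$ forces every simple step of the tower to have degree $1$ or $q$, so the assumption on $\calP$ applies unchanged at each stage, and that irreducibility passes to directed unions; everything else is bookkeeping about algebraic extensions.
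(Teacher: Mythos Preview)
Your argument is correct and follows essentially the same route as the paper: adjoin a transcendence basis to $U$, then close up under $q$-th roots of $R_q(x)$ for $q\in\calQ$, noting that the resulting tower decomposes into steps of prime degree $q\in\calQ$ so that the hypothesis on $\calP$ forces each $P_i$ to remain irreducible at every stage and hence in the union. You are in fact a bit more careful than the paper --- you handle arbitrary transcendence degree in one pass rather than iterating the degree-one case, and you explicitly invoke Gauss's lemma to pass irreducibility from $U$ to $U(S)$ --- but the underlying mechanism is identical.
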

\begin{proof}
Let $G_0=U$.  We show how to construct a field $G_1$ of transcendence degree one over $G_0$ satisfying the same conditions.  Let $H_{0}=G_0(t)$,  and let $H$ be the smallest field in the algebraic closure of $H_0$ containing $\sqrt[q]{R_q(x)}$ for every $q \in \calQ$ and every $x \in G$.  In this case every finite extension of $G$ contained in $H$ is of degree $\prod_{i=1}^m q_i$, with each $q_i \in \calQ$ and can be decomposed into a sequence of extensions each of degree $q_i \in \calQ$ .  By assumption on $\calP$, all the polynomials in $\calP$  will remain irreducible under this extension.
\end{proof}


\section{Technical preliminaries}
\setcounter{equation}{0}
In this section we discuss several properties of function fields to be used in our construction.  We first explain what we mean by a function field.
\begin{definition}[Function Fields]
 For a field $C$ and an element $t$  transcendental over $C$, we say that a field $G$ is a one-variable function field over a field of constants $C$ if $G/C(t)$ is a finite extension and $C$ is algebraically closed in $G$.
\end{definition}
Below we also use primes of function fields to prove that the fields we are constructing have the required attributes.   For a general introduction to primes of function fields and their properties, the reader is referred to \cite{C}.  We define  primes of a function field below.
\begin{definition}[Primes of Function Fields]%
Let $G$ be a one-variable function field over a field of constants $C$.  Let $t\in G\setminus C$.  Let $O_G$ be the integral closure of $C[t]$ in $G$ and let $O_{\infty}$ be the integral closure of $C[\frac{1}{t}]$  in $G$.  We define a prime of $G$ to be either a prime ideal $\pp$ of $O_G$ or a prime ideal  $\qq$ of $O_{\infty}$ such that $\qq \cap C[\frac{1}{t}]$ is the prime ideal generated by $\frac{1}{t}$.  The primes of $O_{\infty}$ will be called infinite primes.
\end{definition}%
We now define order at a prime.
\begin{definition}[Order at a prime]
Let $C, t, G, O_G, O_{\infty}$ be as above.  Let $\pp$ be a prime of $G$ which is a prime ideal of $O_G$.   For $x \in O_G$ if  $m \in \Z_{\geq 0}$ is such that $x \in \pp^{m+1} \setminus \pp^m$, then we say that $\ord_{\pp}x=m$.  If $w\in K, w \not = 0$ and $w=\frac{x}{y}$, where $x,y \in O_G$, then define $\ord_{\pp}w=\ord_{\pp}x-\ord_{\pp}y$.  Define $\ord_{\pp}0 =\infty$.  The order at  infinite primes is defined in an analogous manner.
\end{definition}
\begin{remark}
\label{rem:minorder}
Given our definition of order, it is easy to see that for all $a, b \in G$ and all $G$-primes $\pp$ it is the case that $\ord_{\pp}(a + b) \geq \min(\ord_{\pp}a, \ord_{\pp} b)$, and if $\ord_{\pp}a < \ord_{\pp}b$ then $\ord_{\pp}(a + b) =\ord_{\pp}a$
\end{remark}
\begin{remark}
\label{rem:const}
Observe that for any non-zero constant element $a$ of $G$ and any prime $\pp$ of $G$ it is the case that $\ord_{\pp}a=0$.  At the same time if $z \in G \setminus C$, then for at least one prime $\pp$ of $G$ we have that $\ord_{\pp}z >0$ ($\pp$ is called a zero of $z$), and for at least one prime $\qq$ of $G$ we have that $\ord_{\qq}z <0$ ($\qq$ is called a pole of $z$).  See \cite{C}, Chapter I, Section 3, Corollary 3 for more details.
\end{remark}
We need to define one more object to facilitate the discussion  below:
\begin{definition}[Divisors ]
\label{def:divisor}
Let $C, t, G$ be as above.  Let $\calP(G)$ be the set of all the primes of $G$.  Any finite product $$\Aa=\prod_{\pp \in \calP(G)} \pp^{a(\pp)}, a(\pp) \in \Z$$ is called a divisor of $G$.  If $x \in G$, then a formal product $$\DD(x) = \prod_{\pp \in \calP(G)}\pp^{\ord_{\pp}x}$$ is called the divisor of $x$. (It can be shown that for any $x \in G$ the divisor of $x$ has only finitely many terms with non-zero exponents.)  The set of all divisors form an abelian group under multiplication.
\end{definition}

Next we need to define ramification, degree and relative degree.
\begin{definition}[Ramification]
Let $G$ be as above and let $H$ be a finite extension of $G$.   If $\pp_G$ is a prime ideal of $O_G$, then $\pp_G O_H$ is not necessarily a prime ideal and it can be written uniquely as a product of prime ideals of $O_H$:
\[%
\Pp = \prod_{i=1}^m\pp_{H,i}^{e_i}
\]%
The power $e(\pp_{H,i}/\pp_G)=e_i$ is called the ramification degree of $\pp_{H,i}$ over $\pp_G$ or over $G$.  We also say that $\pp_{H,i}$ lies above $\pp_G$ in $H$ and $\pp_G$ lies below $\pp_{H,i}$ in $G$.  Ramification degree for infinite primes is defined analogously.
\end{definition}
\begin{remark}
\label{rem:multe}
In the notation above, if $x \in G$, then $\ord_{\pp_G}x = e_i\ord_{\pp_{H,i}}x$.
\end{remark}

\begin{definition}[Degree and Relative Degree]
In the above notation, if $\pp_G$ is a prime of $O_G$, then $\pp_G$ is a maximal ideal of $O_G$ and $R_{\pp_G} =O_{G}/\pp_G$ is a finite extension of $C$.  The degree $[R_{\pp_G}:C]$ is called the degree of $\pp_G$.  If $\pp_H$ is a prime above $\pp_G$ in a finite extension $H$ of $G$, then $R_{\pp_H}$ is a finite extension of $R_{\pp_G}$ and
 \[%
 f(\pp_H/\pp_G)=[R_{\pp_H}:R_{\pp_G}]
 \]%
 is called the relative degree of $\pp_H$ over $\pp_G$ or over $G$.  The degree and the relative degree for infinite primes is defined analogously.
\end{definition}

Our first lemma deals with the issue of ramification for a function field extension.
\begin{lemma}%
\label{le:notramified}
If $H/G$ is a separable function field extension with $H=G(\alpha)$, for some $\alpha \in H$ integral with respect to a prime $\pp_G$ of $G$ (i. e. for every $H$-prime $\pp_H$ lying above a $G$-prime $\pp_G$ in $H$ we have that $\ord_{\pp_H}\alpha \geq 0$) and the discriminant of the power basis of $\alpha$ has order equal to zero at $\pp_G$, then $\pp_G$ has no ramified factors in $H$. (See \cite{C}, Chapter III, Section 8, Theorem 7 and Lemma 2.)
\end{lemma}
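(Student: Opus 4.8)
The plan is to reduce the lemma to the classical Kummer--Dedekind theorem by working locally at $\pp_G$. First I would replace $O_G$ by its localization $R$ at the maximal ideal $\pp_G$: this is a discrete valuation ring with residue field $R_{\pp_G}=O_G/\pp_G$, and (because $H/G$ is separable, so the trace form is nondegenerate and the integral closure is finite over $R$) the integral closure $S$ of $R$ in $H$ is a semilocal principal ideal domain, free of rank $n:=[H:G]$ over $R$, whose maximal ideals are exactly the $H$-primes $\pp_H$ lying above $\pp_G$. Let $f(X)\in R[X]$ be the monic minimal polynomial of $\alpha$ over $G$; its coefficients lie in $R$ because $\alpha$ is integral at $\pp_G$, and $\deg f=n$ since $H=G(\alpha)$. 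The discriminant of the power basis $1,\alpha,\dots,\alpha^{n-1}$ equals $\pm\Res(f,f')=\pm\disc(f)$, so the hypothesis $\ord_{\pp_G}\disc(f)=0$ says exactly that $\disc(f)$ is a unit of $R$.

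The key step is the equality $S=R[\alpha]$. Both are free $R$-modules of rank $n$ with $R[\alpha]\subseteq S$; expressing the basis $1,\alpha,\dots,\alpha^{n-1}$ of $R[\alpha]$ in terms of an $R$-basis of $S$ via a matrix $M\in\M_n(R)$, the transformation law for discriminants gives $\disc(f)=\disc(R[\alpha])=(\det M)^2\,\disc(S)$. Since $\disc(f)$ is a unit, $\det M$ is a unit, so $M\in\GL_n(R)$ and $R[\alpha]=S$. This discriminant--index comparison (essentially \cite{C}, Ch.~III, \S8, Lemma~2) is the step I would single out as the main point: it is where the hypothesis on the order of the discriminant is actually consumed, and it is the only part of the argument that is not formal.

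Finally I would reduce modulo $\pp_G$. From $S=R[\alpha]\cong R[X]/(f)$ we obtain
\[
S/\pp_G S\;\cong\;R_{\pp_G}[X]/\bigl(\bar f(X)\bigr),
\]
with $\bar f$ the reduction of $f$. Since $\disc(\bar f)=\overline{\disc(f)}\neq 0$ in $R_{\pp_G}$, the polynomial $\bar f$ is separable and factors as $\bar f=\bar g_1\cdots\bar g_m$ with the $\bar g_i$ pairwise distinct monic irreducibles. By the Chinese Remainder Theorem $S/\pp_G S\cong\prod_{i=1}^m R_{\pp_G}[X]/(\bar g_i)$ is a product of fields, so the primes $\pp_{H,i}$ above $\pp_G$ correspond bijectively to the $\bar g_i$ and $\pp_G S_{\pp_{H,i}}=\pp_{H,i}S_{\pp_{H,i}}$, i.e.\ $e(\pp_{H,i}/\pp_G)=1$ for every $i$; thus $\pp_G$ has no ramified factor in $H$. (Alternatively one could avoid computing $S$ and argue via the different of $H/G$, which divides $(f'(\alpha))$: since $N_{H/G}(f'(\alpha))=\pm\disc(f)$ is a $\pp_G$-unit, the different is trivial at $\pp_G$, and for a separable extension this forces every prime above $\pp_G$ to be unramified.)
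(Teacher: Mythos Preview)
Your argument is correct. Note, however, that the paper does not actually give its own proof of this lemma: the parenthetical reference to \cite{C}, Chapter~III, \S8, Theorem~7 and Lemma~2 \emph{is} the entire justification, and the result is quoted as a standard fact from Chevalley's book. What you have written is precisely the classical proof behind that citation: localize at $\pp_G$, use the discriminant--index relation to force $S=R[\alpha]$ (this is exactly Chevalley's Lemma~2), and then read off unramifiedness from the separability of $\bar f$ (Chevalley's Theorem~7 / Kummer--Dedekind). Your alternative via the different is also valid and is the other standard route. There is nothing to compare here beyond saying that you have supplied the proof the paper outsourced.
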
%

Below is another property of ramification we need for our construction which can be deduced from the definition of ramification as a power in factorization.
\begin{lemma}
\label{le:prodram}
Let $G \subset H \subset L$ be a tower of function field extensions.  If $\pp_G$ is a prime of $G$ and $\pp_H$ and $\pp_L$ are primes above $\pp_G$ in $H$ and $L$ respectively, then $e(\pp_L/\pp_G) = e(\pp_L/\pp_H)e(\pp_H/\pp_L)$.
\end{lemma}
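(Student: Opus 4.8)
The plan is to deduce the multiplicativity of ramification indices along the tower $G\subseteq H\subseteq L$ directly from the order functions, via Remark~\ref{rem:multe}; at the end I indicate the alternative ``exponent in a factorization'' argument hinted at just before the statement. (The right-hand side is of course meant to be $e(\pp_L/\pp_H)\,e(\pp_H/\pp_G)$, i.e.\ the ramification of $\pp_L$ over $\pp_G$ is the product of the ramification of $\pp_L$ over $\pp_H$ and of $\pp_H$ over $\pp_G$.)

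Before applying the remark, I would verify that the three ramification indices are even defined, i.e.\ that $\pp_H$ lies above $\pp_G$ and $\pp_L$ above $\pp_H$ in the Dedekind-domain sense. Fix the element $t\in G\setminus C$ used to define $O_G$; since $t\in G\subseteq H\subseteq L$, the same $t$ defines $O_H$ and $O_L$, and as $C[t]\subseteq O_G$ with $O_H$ integral over $C[t]$, transitivity of integral closure shows $O_H$ is the integral closure of $O_G$ in $H$ and $O_L$ that of $O_H$ in $L$. Hence $O_G\subseteq O_H\subseteq O_L$ is a tower of Dedekind domains, $\pp_G=\pp_H\cap O_G$ and $\pp_H=\pp_L\cap O_H$, and $e(\pp_L/\pp_G)$, $e(\pp_L/\pp_H)$, $e(\pp_H/\pp_G)$ all make sense; for infinite primes one runs the same argument with $\frac{1}{t}$ in place of $t$.

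Now I would invoke Remark~\ref{rem:multe} three times. For any $x\in G$ it gives $\ord_{\pp_G}x = e(\pp_H/\pp_G)\,\ord_{\pp_H}x$; reading the same $x$ as an element of $H$ it gives $\ord_{\pp_H}x = e(\pp_L/\pp_H)\,\ord_{\pp_L}x$; and applied to $L/G$ it gives $\ord_{\pp_G}x = e(\pp_L/\pp_G)\,\ord_{\pp_L}x$. Substituting the second into the first and comparing with the third yields $e(\pp_L/\pp_G)\,\ord_{\pp_L}x = e(\pp_L/\pp_H)\,e(\pp_H/\pp_G)\,\ord_{\pp_L}x$ for every $x\in G$. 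To conclude I only need one $x$ with $\ord_{\pp_L}x\neq 0$: since $\pp_G$ is a nonzero (maximal) ideal of $O_G$, any nonzero $x\in\pp_G$ lies in $\pp_G O_L\subseteq\pp_L$, so $\ord_{\pp_L}x\geq 1$, and dividing through gives the claim. The only step needing real attention is the bookkeeping in the previous paragraph — that the rings $O_G\subseteq O_H\subseteq O_L$ are genuinely nested (thanks to using one and the same $t$) and that infinite primes are included; the algebra itself is immediate. Alternatively, and as the preceding remark suggests, one can argue with ideal factorizations alone: write $\pp_G O_H=\prod_i\pp_{H,i}^{e(\pp_{H,i}/\pp_G)}$ and each $\pp_{H,i}O_L=\prod_j\pp_{L,ij}^{e(\pp_{L,ij}/\pp_{H,i})}$, substitute into $\pp_G O_L=(\pp_G O_H)O_L$, and read off the exponent of $\pp_L$, which equals $e(\pp_L/\pp_H)\,e(\pp_H/\pp_G)$.
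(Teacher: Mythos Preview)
Your proof is correct. The paper does not actually supply a proof of this lemma; it simply states that the result ``can be deduced from the definition of ramification as a power in factorization,'' which is exactly the alternative argument you outline in your last two lines. Your primary argument via Remark~\ref{rem:multe} is a mild variation (using orders rather than ideal factorizations directly), but it is equally valid and indeed you also include the paper's intended factorization argument, so there is nothing to add.
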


The next lemma is also a well known property of function field extensions.
\begin{lemma}%
\label{le:ef}
Let $L/N$ be a finite function field extension.  Let $\pp_N$ be a prime of $N$ and
let \[\pp_{L,1},\ldots,\pp_{L,n}\] be all the factors of $\pp_N$ in $L$.  Let $e_i$ be the ramification
degree of $\pp_{L,i}$ over $\pp_N$ and let $f_i$ be the relative degree  of $\pp_{L,i}$ over
$\pp_N$.  Then
\begin{equation}
\label{eq:ef}
\sum_{i=1}^ne_if_i=n.
\end{equation}
 (See \cite{C}, Chapter IV, Section 1, Theorem 1.)
\end{lemma}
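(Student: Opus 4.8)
The plan is to reduce the identity to a single local dimension count and to evaluate that dimension in two ways. First I would observe that (\ref{eq:ef}) only involves data attached to $\pp_N$ and the primes above it, so after replacing the transcendental parameter $t$ by a suitable element of $N$ (and changing $O_N,O_L$ accordingly) I may assume $\pp_N$ is a finite prime, i.e.\ a nonzero prime of $O_N$; the infinite-prime case then follows from the substitution $t\mapsto 1/t$. Write $n=[L:N]$, which is the quantity on the right-hand side of (\ref{eq:ef}), let $R=(O_N)_{\pp_N}$ be the localization --- a discrete valuation ring with maximal ideal $\mm=\pp_N R$ and residue field $k=R/\mm\cong R_{\pp_N}$ --- and let $S$ be the integral closure of $R$ in $L$. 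Let $\pp_{L,1},\dots,\pp_{L,r}$ be the primes of $L$ above $\pp_N$, with ramification degrees $e_i$ and relative degrees $f_i$; these are exactly the maximal ideals of $S$, one has $S/\pp_{L,i}\cong R_{\pp_{L,i}}$, an extension of $k$ of degree $f_i$, and $\mm S=\prod_{i=1}^r \pp_{L,i}^{e_i}$.

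Next I would compute $\dim_k(S/\mm S)$ twice. On one hand, since $O_N$ is a localization of $C[t]$ --- a finitely generated algebra over a field, hence a Nagata ring --- the integral closure $S$ is a finitely generated $R$-module; being torsion-free over the principal ideal domain $R$ it is free, and one checks $S\otimes_R N=L$, so its rank is $n$ and $\dim_k(S/\mm S)=n$. On the other hand, $S$ is a semi-local Dedekind domain, so by the Chinese Remainder Theorem $S/\mm S\cong\prod_{i=1}^r S/\pp_{L,i}^{e_i}$; filtering each factor by the images of $\pp_{L,i}^{0}\supseteq\pp_{L,i}^{1}\supseteq\cdots\supseteq\pp_{L,i}^{e_i}$ and using that each graded piece $\pp_{L,i}^{j}/\pp_{L,i}^{j+1}$ is one-dimensional over the field $S/\pp_{L,i}$ (because $\pp_{L,i}$ is an invertible ideal of the Dedekind domain $S$), one gets $\dim_k(S/\pp_{L,i}^{e_i})=e_if_i$. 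Comparing the two expressions for $\dim_k(S/\mm S)$ gives $n=\sum_{i=1}^r e_if_i$, which is (\ref{eq:ef}).

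The only non-formal ingredient, and the step I would be most careful about, is the claim that $S$ is a free $R$-module of rank $n$: this rests on module-finiteness of the integral closure (which keeps us in the Noetherian world and makes $S$ genuinely Dedekind) and, reassuringly, does not require $L/N$ to be separable. A second point needing care is the reduction to a finite prime --- one must check that after the change of parameter $L/N$ is still a one-variable function field extension and that the invariants $e_i,f_i$ are unaffected, but this is immediate since order, ramification and relative degree depend only on the prime and not on the particular $t$ used to present it. Alternatively, the reduction can be bypassed by passing to completions and using $L\otimes_N \widehat N_{\pp_N}\cong\prod_i \widehat L_{\pp_{L,i}}$, but the integral-closure argument fits better with the elementary, divisor-theoretic framework of this section.
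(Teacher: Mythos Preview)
The paper does not actually prove this lemma --- the parenthetical reference to Chevalley \emph{is} the proof --- so there is no in-paper argument to compare against. Your proposal is the standard commutative-algebra proof of the fundamental identity (localize at $\pp_N$ to a DVR $R$, take its integral closure $S$ in $L$, and compute $\dim_k(S/\mm S)$ two ways), and it is correct. You also correctly sort out the notational overload in the statement, where the same symbol $n$ is used both for the number of primes above $\pp_N$ and for the right-hand side $[L:N]$.

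One small slip worth fixing: you write that ``$O_N$ is a localization of $C[t]$''. It is not; in the paper's conventions $O_N$ is the \emph{integral closure} of $C[t]$ in $N$. The conclusion you need still holds, since $C[t]$ is Nagata (any finitely generated algebra over a field is), and the Nagata property passes to module-finite extensions and to localizations, so $R=(O_N)_{\pp_N}$ is a Nagata DVR and $S$ is indeed module-finite (hence free) over $R$. Your observation that this step does not require separability of $L/N$ is exactly why invoking the Nagata property is preferable to the trace-form argument here.
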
%
From this lemma we derive two corollaries to be used in our construction in Section \ref{sec:construction}.
\begin{corollary}
\label{cor:ef}
Let $L/N$ be a finite function field extension such that $[L:N]\not \equiv 0 \mod q$, where $q$ is a rational prime number.  In this case if  $\pp_N$ is a prime of $N$, then for some prime $\pp_L$ of $L$ lying above $\pp_N$, we have that $e(\pp_L/\pp_N) \not \equiv 0 \mod q$.
\end{corollary}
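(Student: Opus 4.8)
The approach is to deduce the statement directly from the fundamental identity recorded in Lemma~\ref{le:ef}. First I would fix the prime $\pp_N$ of $N$ and enumerate all of its factors in $L$, say $\pp_{L,1},\ldots,\pp_{L,m}$, writing $e_i=e(\pp_{L,i}/\pp_N)$ and $f_i=f(\pp_{L,i}/\pp_N)$ for the corresponding ramification and relative degrees. Lemma~\ref{le:ef} then supplies the relation $\sum_{i=1}^m e_if_i=[L:N]$, valid uniformly whether $\pp_N$ is a finite or an infinite prime of $N$.

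Next I would argue by contradiction. Suppose that $q\mid e_i$ for every $i\in\{1,\ldots,m\}$. Then $q$ divides every summand $e_if_i$, hence $q$ divides $\sum_{i=1}^m e_if_i=[L:N]$, contradicting the hypothesis $[L:N]\not\equiv 0\mod q$. Consequently at least one index $i$ satisfies $e_i\not\equiv 0\mod q$, and taking $\pp_L$ to be the corresponding factor $\pp_{L,i}$ of $\pp_N$ in $L$ completes the argument.

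There is no genuine obstacle here: the corollary is an immediate numerical consequence of Lemma~\ref{le:ef}. The only points worth a moment's care are that one must invoke the full identity $\sum_{i} e_if_i=[L:N]$ --- with the relative degrees $f_i$ present --- rather than merely the bound on the number of primes lying above $\pp_N$, and that the contradiction argument be phrased so that it applies verbatim to infinite primes as well as to finite ones, which is possible precisely because Lemma~\ref{le:ef} is stated for both.
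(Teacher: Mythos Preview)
Your proposal is correct and matches the paper's own proof essentially verbatim: both argue by contradiction from the identity $\sum_i e_i f_i = [L:N]$ of Lemma~\ref{le:ef}, observing that if every $e_i$ were divisible by $q$ then so would $[L:N]$ be. Your additional remarks about the relative degrees $f_i$ and about infinite primes are accurate but not strictly needed beyond what the lemma already provides.
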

\begin{proof}
Suppose the assertion of the lemma is not true.  Let \[\pp_{L,1},\ldots,\pp_{L,n}\] be all the factors of $\pp_N$ in $L$.  If for all $i$ we have that $e(\pp_{L,i}/\pp_N) \equiv 0 \mod q$, then from \eqref{eq:ef} we conclude that $q | [N:L]$ contradicting our assumptions.
\end{proof}
\begin{corollary}
\label{cor:ef2}
Let $L/N$ be a finite function field extension such that $[L:N]\not \equiv 0 \mod q$, where $q$ is a rational prime number.  Let $\pp_N$ be a prime of $N$.  If $x \in N$ is such that $\ord_{\pp_N}x \not \equiv 0 \mod q$ then for some $L$ -prime $\pp_L$ lying above $\pp_N$, we have that $\ord_{\pp_L} \not \equiv 0 \mod q$.
\end{corollary}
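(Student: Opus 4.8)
The plan is to deduce this directly from the multiplicativity of order under ramification recorded in Remark \ref{rem:multe}, together with an elementary divisibility observation; in fact the argument runs parallel to the proof of Corollary \ref{cor:ef} but is even shorter.

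First I would fix a prime $\pp_L$ of $L$ lying above $\pp_N$; such a prime exists because in the factorization $\pp_N O_L = \prod_{i=1}^n \pp_{L,i}^{e_i}$ appearing in Lemma \ref{le:ef} the product is nonempty (indeed $\sum_i e_i f_i = [L:N] \geq 1$). Since $x \in N \subseteq L$, Remark \ref{rem:multe} applies and gives
\[
\ord_{\pp_N} x = e(\pp_L/\pp_N)\,\ord_{\pp_L} x .
\]
Now I would argue by contraposition: if $q \mid \ord_{\pp_L} x$, then $q$ divides the right-hand side, hence $q \mid \ord_{\pp_N} x$, contradicting the hypothesis $\ord_{\pp_N} x \not\equiv 0 \mod q$. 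Therefore $\ord_{\pp_L} x \not\equiv 0 \mod q$, which is the desired conclusion. (In fact this shows that \emph{every} prime of $L$ lying above $\pp_N$ has this property, so the existential statement in the corollary is comfortably satisfied.)

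I do not expect a genuine obstacle here: the only point requiring a word is the existence of at least one prime of $L$ above $\pp_N$, which is immediate from Lemma \ref{le:ef}, and one notices along the way that the hypothesis $[L:N] \not\equiv 0 \mod q$ is not actually needed for the statement as written — it is presumably retained only to match the hypotheses under which Corollaries \ref{cor:ef} and \ref{cor:ef2} are applied in Section \ref{sec:construction}. If one insisted on using that hypothesis, an alternative route is to invoke Corollary \ref{cor:ef} to produce a prime $\pp_L$ above $\pp_N$ with $e(\pp_L/\pp_N) \not\equiv 0 \mod q$ and then run the same divisibility computation, but this is strictly more than is required.
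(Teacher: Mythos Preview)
Your argument has the multiplicativity formula from Remark~\ref{rem:multe} backwards. The correct relation for $x \in N$ and $\pp_L$ lying above $\pp_N$ is
\[
\ord_{\pp_L} x = e(\pp_L/\pp_N)\,\ord_{\pp_N} x,
\]
not $\ord_{\pp_N} x = e(\pp_L/\pp_N)\,\ord_{\pp_L} x$. (The statement of Remark~\ref{rem:multe} in the paper is unfortunately misprinted; the paper's own proof of this corollary quotes the formula in the correct direction.) With the correct formula your contraposition collapses: knowing $q \mid \ord_{\pp_L} x$ tells you only that $q$ divides the product $e(\pp_L/\pp_N)\,\ord_{\pp_N} x$, and this can happen because $q \mid e(\pp_L/\pp_N)$ even when $q \nmid \ord_{\pp_N} x$.

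This is not a phantom obstruction. Take $L/N$ totally ramified of degree $q$ at $\pp_N$ (for instance $N = C(t)$, $L = C(t^{1/q})$, $\pp_N$ the zero of $t$) and let $x$ be a uniformizer at $\pp_N$. Then $\ord_{\pp_N} x = 1 \not\equiv 0 \mod q$, but the unique prime $\pp_L$ above $\pp_N$ has $\ord_{\pp_L} x = q \equiv 0 \mod q$. So the hypothesis $[L:N] \not\equiv 0 \mod q$ is genuinely needed, and your claims that it is superfluous and that \emph{every} prime above $\pp_N$ works are both false. The ``alternative route'' you sketch at the end---invoking Corollary~\ref{cor:ef} to first produce a prime $\pp_L$ with $e(\pp_L/\pp_N) \not\equiv 0 \mod q$ and then multiplying---is precisely the paper's proof and is the correct argument.
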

\begin{proof}
By Remark \ref{rem:multe} for any $L$-prime $\pp_L$ above $\pp_N$ we have that $\ord_{\pp_L}x = e(\pp_L/\pp_N)\ord_{\pp_N}x$.  Thus our conclusion follows from Corollary \ref{cor:ef}.
\end{proof}
Next we need an elementary lemma and two corollaries to establish a property of a class of field extensions.
\begin{lemma}
\label{le:qthroot}
Let $G$ be any field and let $q$ be a prime number different from the characteristic of $G$.  Let $W \in G \setminus G^q$, where $G^q$ is the set of all the $q$-th powers of $G$.  Let $\beta$ be a root of $X^q - W$ in some algebraic closure of $G$.  Then $[G(\beta):G]=q$.
\end{lemma}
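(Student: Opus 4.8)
The plan is to reduce the statement to showing that $f(X) := X^q - W$ is irreducible over $G$; granting that, since $\beta$ is a root of $f$ we get $[G(\beta):G] = \deg f = q$ at once.

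I would first set the stage. Since $0 = 0^q \in G^q$, the hypothesis $W \notin G^q$ forces $W \neq 0$, hence $\beta \neq 0$. Because $q \neq \Char(G)$, the polynomial $X^q - 1$ is separable over $G$, so $\overline{G}$ contains a primitive $q$-th root of unity $\zeta$; then the $q$ elements $\zeta^i\beta$ $(0 \le i < q)$ are pairwise distinct and are exactly the roots of $f$ in $\overline{G}$, so in particular $f$ is separable. Now suppose for contradiction that $f = gh$ with $g,h \in G[X]$ monic and $d := \deg g$ satisfying $0 < d < q$. By unique factorization in $\overline{G}[X]$, $g$ is a product of $d$ of the distinct linear factors $X - \zeta^i\beta$, so its constant term has the form $c := g(0) = \pm\,\zeta^m\beta^d$ for some integer $m$, and $c \in G$ since $g \in G[X]$. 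Raising to the $q$-th power and using $\zeta^q = 1$ and $\beta^q = W$ gives $c^q = \pm\,W^d$, and hence $W^d \in G^q$: when $q$ is odd the sign $\pm 1$ is itself a $q$-th power so $W^d = ((-1)^d c)^q$, and when $q = 2$ one has $d = 1$, so $\pm\beta = c \in G$ and $W = \beta^2 \in G^2$ outright.

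The finish is the arithmetic step: since $q$ is prime and $0 < d < q$, $\gcd(d,q) = 1$, so $ad + bq = 1$ for some $a, b \in \Z$, and then
\[
W = W^{ad+bq} = (W^d)^a \,(W^b)^q \in G^q,
\]
contradicting $W \notin G^q$. Therefore $f$ is irreducible over $G$ and $[G(\beta):G] = q$. I expect the only fussy point to be the sign bookkeeping in $c^q = \pm W^d$; splitting off the case $q = 2$ (where it is trivial) and using that $-1$ is a $q$-th power for odd $q$ disposes of it, so there is no real obstacle here. The load-bearing hypothesis throughout is $q \neq \Char(G)$: it is precisely what makes $f$ separable with roots the evident $q$ rotates of $\beta$, which is what lets the constant-term computation produce the relation $W^d \in G^q$.
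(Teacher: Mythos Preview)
Your proof is correct and follows essentially the same route as the paper's: both arguments observe that the roots of $X^q-W$ are the $q$ rotates $\zeta^i\beta$, take the product of a proper nonempty subset of them lying in $G$ (you via the constant term of a hypothetical factor $g$, the paper via the norm $\mathbf N_{G(\beta)/G}(\beta)$ of $\beta$ over $G$), and then use B\'ezout with $\gcd(d,q)=1$ to force $W\in G^q$. The only cosmetic difference is the order of operations---you raise to the $q$-th power first to kill the root of unity and then apply B\'ezout, while the paper applies B\'ezout first to isolate $\xi_q^{jb}\beta\in G$ and then takes the $q$-th power---and that you track the sign in $g(0)$ explicitly, which the norm formulation avoids.
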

\begin{proof}%
Clearly $[G(\beta):G] \leq q$.  Suppose $\beta$ is of degree $m < q$ over $G$.  Any  conjugate of $\beta$ over $G$ is of the form $\xi_q^i\beta$, where $\xi_q$ is a primitive $q$-th of unity.    If $\xi_q^{i_1}\beta, \dots, \xi_q^{i_{m-1}}\beta$ are all the conjugates of $\beta$ over $G$ not equal to $\beta$, then ${\mathbf N}_{G(\beta)/G}(\beta)=\xi^j_q\beta^m \in G$, where $j \in \Z_{\geq 0}$.  Since $(q,m)=1$, there exist $a, b \in \Z$ such that $aq+bm = 1$ and thus $W^a(\xi^j_q\beta^m)^b=  \xi^{jb}_q\beta \in G$ making $W$ a $q$-th power in $G$ in contradiction of our assumptions.
\end{proof}%
\begin{corollary}
\label{cor:totdegree}
Let $G$ be any field, let $\{q_1,\ldots, q_n\}$ be a set of distinct prime numbers such that for some $\{A_1,\ldots, A_n \} \subset G$ each of the polynomials $T^{q_i}-A_i$ has no roots in $G$.  In this case $$[G(\sqrt[q_1]{A_1},\ldots,\sqrt[q_n]{A_n}):G]=q_1\ldots q_n.$$
\end{corollary}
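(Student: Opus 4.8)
The plan is to argue by induction on $n$, peeling off one radical at a time and invoking Lemma \ref{le:qthroot} at each stage; I may assume each $q_i \ne \Char G$ (the case $q_i = \Char G$ is handled identically, using instead the elementary fact that $X^p - A$ is irreducible over a field of characteristic $p$ precisely when $A$ is not a $p$-th power there). For $n = 1$ the claim is exactly Lemma \ref{le:qthroot}. So let $n \geq 2$, set $L = G(\sqrt[q_1]{A_1},\ldots,\sqrt[q_{n-1}]{A_{n-1}})$, and assume inductively that $[L:G] = q_1\cdots q_{n-1}$. By multiplicativity of degrees in a tower it suffices to show $[L(\sqrt[q_n]{A_n}):L] = q_n$, and by Lemma \ref{le:qthroot} applied over $L$ (note $\Char L = \Char G \ne q_n$) this reduces to showing that $A_n$ is not a $q_n$-th power in $L$.

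The heart of the matter is thus a descent statement: if $N/M$ is a finite field extension whose degree is coprime to a prime $q$, and $A \in M$ is not a $q$-th power in $M$, then $A$ is not a $q$-th power in $N$. I would prove this by the same norm-and-B\'ezout device already used in the proof of Lemma \ref{le:qthroot}. Suppose for contradiction that $A = b^q$ for some $b \in N$, and put $m = [M(b):M]$; since $m \mid [N:M]$ we have $\gcd(m,q) = 1$. Applying the norm $\mathbf{N}_{M(b)/M}$, which is multiplicative and satisfies $\mathbf{N}_{M(b)/M}(c) = c^m$ for $c \in M$, gives $A^m = \mathbf{N}_{M(b)/M}(A) = \mathbf{N}_{M(b)/M}(b^q) = \mathbf{N}_{M(b)/M}(b)^q$. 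Now pick $u, v \in \Z$ with $um + vq = 1$; then $A = (A^m)^u (A^v)^q = (\mathbf{N}_{M(b)/M}(b)^u A^v)^q$, contradicting that $A \notin M^q$. (Here $A \ne 0$, since otherwise $T^q - A$ would have the root $0$ in $M$.)

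It remains only to assemble the pieces: with $M = G$, $N = L$, $q = q_n$, the degree $[L:G] = q_1\cdots q_{n-1}$ is coprime to the distinct prime $q_n$, and $A_n \notin G^{q_n}$ by hypothesis, so the descent statement gives $A_n \notin L^{q_n}$; hence $[L(\sqrt[q_n]{A_n}):L] = q_n$ and therefore $[G(\sqrt[q_1]{A_1},\ldots,\sqrt[q_n]{A_n}):G] = q_1\cdots q_n$. I expect the descent statement of the second paragraph to be the only step requiring any real argument; everything else is bookkeeping with the tower law, and the overall structure mirrors that of the proof of Lemma \ref{le:qthroot}.
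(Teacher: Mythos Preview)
Your proof is correct, but it takes a different route from the paper's. The paper argues by a squeeze: on one hand each step of the tower has degree at most $q_{i+1}$ (since $\sqrt[q_{i+1}]{A_{i+1}}$ satisfies a polynomial of that degree), so the total degree is at most $q_1\cdots q_n$; on the other hand, for each $i$ the subfield $G(\sqrt[q_i]{A_i})$ already has degree $q_i$ over $G$ by Lemma~\ref{le:qthroot}, so $q_i$ divides the total degree, and since the $q_i$ are distinct primes their product does too. Equality follows.

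Your approach instead proves the stronger statement that each successive step in the tower has degree \emph{exactly} $q_n$, via an explicit descent lemma (coprime-degree extensions cannot create new $q$-th powers) established by the same norm-and-B\'ezout trick as Lemma~\ref{le:qthroot}. This is a bit more work, but it isolates a reusable fact and yields finer information about the tower; the paper's argument is shorter and exploits the distinctness of the primes more directly through divisibility rather than through coprimality at each stage.
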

\begin{proof}
By Lemma \ref{le:qthroot}, we have that $[G(\sqrt[q_1]{A_1},\ldots,\sqrt[q_n]{A_n}):G] \leq q_1\ldots q_n$ since $$[G(\sqrt[q_1]{A_1},\ldots,\sqrt[q_i]{A_i},\sqrt[q_{i+1}]A_{i+1}):G(\sqrt[q_1]{A_1},\ldots,\sqrt[q_i]{A_i})]  \leq q_{i+1}.$$  At the same time since $G(\sqrt[q_i]{A_i}) \subseteq G(\sqrt[q_1]{A_1},\ldots,\sqrt[q_n]{A_n})$ we have that $$[G(\sqrt[q_1]{A_1},\ldots,\sqrt[q_n]{A_n}):G] \equiv 0 \mod q_1\ldots q_n,$$ and the assertion of the corollary follows.
\end{proof}
\begin{corollary}%
\label{le:ramification}%
Let $G$ be a function field over a field of constants $C$.  Let  $q$ be a  prime number not equal  to the characteristic of $G$,  and let $W \in G \setminus G^q,$ where $G^q$ is the set of all the $q$-th powers in $G$.  If $\beta$ is a solution to $X^q -W=0$ in some algebraic closure of $G$, then the only primes of $G$ which ramify in the extension $G(\beta)/G$ are the primes which are zeros or poles of $W$ of order not divisible by $q$.  For the latter primes the ramification degree is $q$.
\end{corollary}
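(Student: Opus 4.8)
The plan is to reduce everything to a degree count (Lemma~\ref{le:ef}) together with the discriminant criterion of Lemma~\ref{le:notramified}. Since $q \ne \Char G$, the polynomial $X^q - W$ is separable, so $H := G(\beta)$ is a separable extension of $G$, and it has degree $q$ by Lemma~\ref{le:qthroot}. As $q$ is prime, Lemma~\ref{le:ef} leaves only two options for a prime $\pp_G$ of $G$: it is unramified in $H$, or there is a single prime $\pp_H$ above it with $e(\pp_H/\pp_G)=q$ and relative degree $1$. So ``$\pp_G$ ramifies'' means exactly ``$e(\pp_H/\pp_G)=q$'', and it remains to identify these primes; this will also establish the last sentence of the statement.

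First I would do the easy direction. Suppose $\ord_{\pp_G}W \not\equiv 0 \bmod q$ and let $\pp_H$ lie above $\pp_G$. From $\beta^q = W$ and Remark~\ref{rem:multe}, $q\,\ord_{\pp_H}\beta = \ord_{\pp_H}W = e(\pp_H/\pp_G)\,\ord_{\pp_G}W$, so $q \mid e(\pp_H/\pp_G)\,\ord_{\pp_G}W$; since $q$ is prime and $q \nmid \ord_{\pp_G}W$, we get $q \mid e(\pp_H/\pp_G)$, hence $e(\pp_H/\pp_G)=q$ because $e(\pp_H/\pp_G) \le [H:G]=q$. Thus every zero or pole of $W$ of order prime to $q$ ramifies, with ramification degree exactly $q$.

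The substance is the converse: if $q \mid \ord_{\pp_G}W$, say $\ord_{\pp_G}W = qk$, then $\pp_G$ is unramified. Here I would clear the $q$-divisible part of the order by a $q$-th power coming from $G$: choose a uniformizer $\pi\in G$ at $\pp_G$ (this exists because $\ord_{\pp_G}$ is a discrete valuation surjecting onto $\Z$), put $W' = W\pi^{-qk}$ and $\beta' = \beta\pi^{-k}$. Then $\beta'^q = W'$, $G(\beta')=G(\beta)=H$ since $\pi\in G$, $\ord_{\pp_G}W' = 0$, and for any $\pp_H$ above $\pp_G$ one has $q\,\ord_{\pp_H}\beta' = e(\pp_H/\pp_G)\,\ord_{\pp_G}W' = 0$; hence $\beta'$ is a unit, in particular integral, at $\pp_G$. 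Because $[H:G]=q$, the minimal polynomial of $\beta'$ over $G$ is $X^q - W'$, whose discriminant (the discriminant of the power basis of $\beta'$) equals, up to sign, $q^q (W')^{q-1}$. Now $q\ne\Char G$ makes $q\cdot 1_G$ a nonzero constant of $G$, so $\ord_{\pp_G}(q)=0$ by Remark~\ref{rem:const}, and $\ord_{\pp_G}(W')=0$ by construction; therefore this discriminant has order $0$ at $\pp_G$, and Lemma~\ref{le:notramified} shows $\pp_G$ has no ramified factor in $H$. (The case $\ord_{\pp_G}W=0$ is just $k=0$ here, so primes that are neither zeros nor poles of $W$ are unramified as well.)

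The only delicate point is this last reduction: one has to absorb the $q$-divisible part of $\ord_{\pp_G}W$ into a $q$-th power from $G$ so as to land in the hypotheses of Lemma~\ref{le:notramified}, and then check that neither the integer $q$ (a unit at $\pp_G$ because $q$ is prime to the characteristic) nor the rescaled $W'$ (a unit at $\pp_G$ by the choice of $\pi$) contributes to the valuation of the discriminant. Everything else is routine manipulation with Remark~\ref{rem:multe} and Lemma~\ref{le:ef}.
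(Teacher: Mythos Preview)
Your argument is correct and follows essentially the same route as the paper: rescale $W$ by a $q$-th power from $G$ to make its order at $\pp_G$ vanish (you use a uniformizer where the paper invokes Weak Approximation), compute the discriminant of the resulting power basis, and apply Lemma~\ref{le:notramified} for the unramified direction, while the ramified direction is the same divisibility argument $q\mid e(\pp_H/\pp_G)\,\ord_{\pp_G}W$. One small caution: the dichotomy you announce in the first paragraph (``unramified, or a single prime above with $e=q$'') does \emph{not} follow from Lemma~\ref{le:ef} alone for an arbitrary degree-$q$ extension, but this is harmless since your second and third paragraphs prove both implications directly without ever using that claim.
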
%
\begin{proof}%
By Lemma \ref{le:qthroot}, we know that $[G(\beta):G]=q$.  Further,  note that the discriminant of the power basis of $\beta$, denoted by $D(\beta)$, is equal to $\prod_{0\leq i\not =j< m} (\beta_i -\beta_j)$, where $\beta_i = \xi_q^i\beta$, and $\xi_q$ is a primitive $q$-th root of unity.  Thus $D(\beta)=cW^{q-1}$ with $c \in C$, and if $\pp_G$ is a prime of $G$ and $\ord_{\pp_G}W=0$, then the conclusion follows immediately by Remark \ref{rem:const} and Lemma \ref{le:notramified}.  If, alternatively, we have that $\ord_{\pp_G}W \equiv 0 \mod q$, then by the Weak Approximation Theorem we can find an element $V \in G$ such that $-q\ord_{\pp_G}V=\ord_{\pp_G}W$ and replace $W$ by $W_1=WV^q$.  If we now let $\beta_1$ be a root of $X^q-W_1$, then clearly $G(\beta_1)=G(\beta)$ and $\ord_{\pp_G}D(\beta_1)=\ord_{\pp_G}W_1^{q-1}=0$.  Thus the the first assertion of the corollary holds.

Suppose now that $\ord_{\pp_G}W\not \equiv 0 \mod q$.    If $\pp_{G(\beta)}$ is a $G(\beta)$-prime above $\pp_G$, then by Remark \ref{rem:multe} we have that
\[
q\ord_{\pp_{G(\beta)}}\beta=\ord_{\pp_{G(\beta)}}W=e(\pp_{G(\beta)}/\pp_G)\ord_{\pp_G}W.
\]
Thus, $q | e(\pp_{G(\beta)}/\pp_G)$.    Since by Lemma \ref{le:ef} we have that $1 \leq e(\pp_{G(\beta)}/\pp_G) \leq [G(\beta):G]=q$, we must conclude that $e(\pp_{G(\beta)}/\pp_G)=q$.
\end{proof}%
Our next task is prove a series of technical propositions concerning evaluating the order at a prime.
\begin{lemma}%
\label{le:order}%
 Let $G$ be a function field over a field of constants $C$. Let $R(T) \in C(T)$. Let $s \in G \setminus C$ and let $\pp$ be a prime of $G$. Let $\displaystyle R(T)=\frac{A(T)}{B(T)}$, where $A(T), B(T) \in C[T]$ and are relatively prime in $C[T]$. If we suppose further that $A(T) = \prod_{i=1}^m(T-c_i)^{n_i}$ and $B(T) = \prod_{i=1}^k(T-b_i)^{j_i}$ with $c_i,b_i \in C$ and are all distinct, while all $n_i, j_i$ are positive integers, then the following statements are true.
\begin{enumerate}
\item  If $\pp$ is not a pole of $s$ and $\ord_{\pp}R(s) \not = 0$, we have that $\ord_{\pp}R(s) =n(c)\ord_{\pp}(s-c)$, where $c$ is the unique element of ${\tt R}=\{a_1,\ldots,a_m, b_1,\ldots, b_k\}$ with $\ord_{\pp}(s-c) >0$,  and $n(c)=n_i$, if $c=c_i$, and $n(c)=-j_i$, if $c=b_i$.
\item  If $\pp$ is not a pole of $s$ and $\ord_{\pp}R(s)=0$, then $\forall c \in {\tt R}: \ord_{\pp}(s-c) =0$.
\item  If $\pp$ is a pole of $s$, then $\ord_{\pp}R(s)=\ord_{\pp}s(\deg A(T) -\deg B(T))$.
\end{enumerate}
\end{lemma}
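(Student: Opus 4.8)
The plan is to reduce all three statements to additivity of $\ord_{\pp}$ over products combined with the elementary behaviour of $\ord_{\pp}(s-c)$ for a constant $c\in C$. First I would note that since $s\in G\setminus C$ while each $c_i$ and each $b_i$ lies in $C$, we have $s-c_i\neq 0$ and $s-b_i\neq 0$, so $A(s)\neq 0$ and $B(s)\neq 0$; hence $R(s)$ is a well-defined nonzero element of $G$ and $\ord_{\pp}R(s)$ is a finite integer. Expanding the given factorizations of $A$ and $B$ then yields
\[
\ord_{\pp}R(s)=\sum_{i=1}^{m}n_i\,\ord_{\pp}(s-c_i)\;-\;\sum_{i=1}^{k}j_i\,\ord_{\pp}(s-b_i),
\]
so it suffices to understand each term $\ord_{\pp}(s-c)$ for $c\in{\tt R}$.

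I would next isolate two facts, both immediate consequences of Remarks~\ref{rem:minorder} and~\ref{rem:const}. (a) If $\pp$ is not a pole of $s$, then $\ord_{\pp}(s-c)\ge 0$ for every $c\in C$ — since $\ord_{\pp}(s-c)\ge\min(\ord_{\pp}s,\ord_{\pp}(-c))=\min(\ord_{\pp}s,0)=0$ — and moreover at most one $c\in C$ can have $\ord_{\pp}(s-c)>0$: if $c\neq c'$ both did, then $\ord_{\pp}(c-c')=\ord_{\pp}\bigl((s-c')-(s-c)\bigr)\ge\min\bigl(\ord_{\pp}(s-c'),\ord_{\pp}(s-c)\bigr)>0$, contradicting the fact that $c-c'$ is a nonzero constant. (b) If $\pp$ is a pole of $s$, i.e.\ $\ord_{\pp}s<0$, then $\ord_{\pp}(s-c)=\ord_{\pp}s$ for every $c\in C$, since $\ord_{\pp}s<0=\ord_{\pp}(-c)$.

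The three parts then follow by a short case analysis. Suppose first $\pp$ is not a pole of $s$ and put $Z=\{c\in{\tt R}:\ord_{\pp}(s-c)>0\}$; by (a), $|Z|\le 1$. If $Z=\emptyset$ then every summand above vanishes, so $\ord_{\pp}R(s)=0$ while $\ord_{\pp}(s-c)=0$ for all $c\in{\tt R}$; this proves part~(2), and shows that the hypothesis $\ord_{\pp}R(s)\neq 0$ in part~(1) forces $Z\neq\emptyset$. If $Z=\{c\}$, say $c=c_i$ (the case $c=b_i$ being symmetric), then all other summands vanish and $\ord_{\pp}R(s)=n_i\,\ord_{\pp}(s-c)=n(c)\,\ord_{\pp}(s-c)$, which is part~(1). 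If instead $\pp$ is a pole of $s$, then by (b) every $\ord_{\pp}(s-c)$ equals $\ord_{\pp}s$, so $\ord_{\pp}R(s)=\bigl(\sum_{i=1}^{m}n_i-\sum_{i=1}^{k}j_i\bigr)\ord_{\pp}s=(\deg A-\deg B)\,\ord_{\pp}s$, which is part~(3).

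The only point calling for a little care — the \emph{main obstacle}, modest as it is — is ensuring that $n(c)$ is unambiguous. This is precisely where the hypotheses that $A$ and $B$ are relatively prime and that $c_1,\dots,c_m,b_1,\dots,b_k$ are pairwise distinct come in: they guarantee that the single constant $c$ produced by (a) occurs in exactly one of the two products and there with a single well-defined exponent, so the formula in part~(1) makes sense. Beyond that, the argument is just repeated application of the ultrametric inequality of Remark~\ref{rem:minorder} to differences $s-c$ with $c$ constant, and I anticipate no genuine difficulty.
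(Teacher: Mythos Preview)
Your proposal is correct and follows essentially the same route as the paper: expand $\ord_{\pp}R(s)$ additively over the linear factors, use the ultrametric inequality (Remark~\ref{rem:minorder}) together with Remark~\ref{rem:const} to see that at most one $c\in{\tt R}$ can have $\ord_{\pp}(s-c)>0$ when $\pp$ is not a pole of $s$, and that $\ord_{\pp}(s-c)=\ord_{\pp}s$ when it is. Your added remarks on the well-definedness of $R(s)$ and of $n(c)$ are welcome clarifications but do not change the argument.
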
%
\begin{proof}%
If $\pp$ is not a pole of $s$, then $\ord_{\pp}(s-c) \geq \min(\ord_{\pp}s, \ord_{\pp}c) \geq 0$ for all  $c \in {\tt R}$.   Thus in this case we have two possibilities: either
\begin{equation}
\label{ineq:1}
\exists c \in {\tt R}: \ord_{\pp}(s-c) >0
\end{equation}
or
\begin{equation}
\label{ineq:2}
\forall c \in {\tt R}: \ord_{\pp}(s-c) =0.
\end{equation}
In the former case we observe that the inequality in \eqref{ineq:1} can be true for at most one $c \in {\tt R}$.  Indeed, if we assume that for some  $c_1\not = c_2 \in \tt R$ we have that $\ord_{\pp}(s-c_1) >0$ and $\ord_{\pp}(s-c_2) >0$, then we will conclude that $\ord_{\pp}(c_1-c_2) >0$ which is impossible by Remark \ref{rem:const}.  Thus, if \eqref{ineq:1} holds,  we have that
\[
\ord_{\pp}R(s) =\sum_{i=1}^m \ord_{\pp}(s-a_i)^{n_i} -\sum_{i=1}^k\ord_{\pp}(s-b_i)^{j_i}= n(c)\ord_{\pp}(s-c).
\]

Now if \eqref{ineq:2} holds, then, clearly,
\[
\ord_{\pp}R(s)=0.
\]
Conversely, if $\ord_{\pp}R(s)=0$, then \eqref{ineq:2} must also hold in view of the argument above.  Thus the assertion of the lemma is true in this case also.

Finally, if $\ord_{\pp}s<0$, then
\[
\ord_{\pp}R(s) =\sum_{i=1}^m \ord_{\pp}(s-a_i)^{n_i} -\sum_{i=1}^k\ord_{\pp}(s-b_i)^{j_i}= \ord_{\pp}s(\sum_{i=1}^mn_i-\sum_{i=1}^kj_i) = \ord_{\pp}s(\deg A(T) -\deg B(T)).
\]
\end{proof}%
In the notation above, the following lemma gives necessary and sufficient condition for the order of $R(T)$ at some $G$-prime $\pp$ to be divisible by  $q$.
\begin{lemma}%
\label{le:notq}
Let $\displaystyle G, R(T)=\frac{A(T)}{B(t)},  \tt R$ be as in Lemma \ref{le:order}, let $q$ be a prime number, let $w \in G \setminus C, u \in C$.   If $\deg A(T) -\deg B(T) \not \equiv 0 \mod q$ and $\pp$ is a prime of $G$, then
\begin{enumerate}
  \item $\ord_{\pp}R(w-u) \equiv 0 \mod q$ if and only if either
  \begin{equation}
  \label{cond:1}
 \exists c \in {\tt R}: [n(c)\ord_{\pp}(w-u-c) \equiv 0 \mod q \land \ord_{\pp}(w-u-c) \not =0],
  \end{equation}
   or
   \begin{equation}
   \label{cond:2}
   \forall c \in {\tt R}: \ord_{\pp}(w-u-c) =0,
   \end{equation}
   or
   \begin{equation}
   \label{cond:3}
   \ord_{\pp}w <0 \land \exists c \in {\tt R}: [\ord_{\pp}(w-u-c) \equiv 0 \mod q ].
   \end{equation}
  \item $\ord_{\pp}R(w-u) \not \equiv 0 \mod q$ if and only if
  \begin{equation}
  \label{cond:4}
\ord_{\pp}w \geq 0 \land  \exists c \in {\tt R}: n(c)\ord_{\pp}(w-u-c) \not \equiv 0 \mod q,
\end{equation}
or
\begin{equation}
  \label{cond:5}
\ord_{\pp}w < 0 \land  \exists c \in {\tt R}: \ord_{\pp}(w-u-c) \not \equiv 0 \mod q.
\end{equation}
\end{enumerate}
 \end{lemma}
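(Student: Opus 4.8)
The plan is to reduce everything to Lemma~\ref{le:order} applied with $s=w-u$ and then to read off divisibility modulo $q$. First I would record the elementary fact that adding a constant to $w$ does not affect its poles: by Remarks~\ref{rem:const} and~\ref{rem:minorder}, for every $d\in C$ and every prime $\pp$ of $G$ one has $\ord_{\pp}(w-d)\ge 0$ whenever $\ord_{\pp}w\ge 0$, while $\ord_{\pp}(w-d)=\ord_{\pp}w$ whenever $\ord_{\pp}w<0$. Since $u$ and every $c\in{\tt R}$ lie in $C$, it follows that $s=w-u\in G\setminus C$ has $\pp$ as a pole exactly when $\ord_{\pp}w<0$, and that in that case $\ord_{\pp}(s-c)=\ord_{\pp}(w-u-c)=\ord_{\pp}w$ for all $c\in{\tt R}$. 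So I would split the argument according to whether $\pp$ is a pole of $w$.

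If $\ord_{\pp}w\ge 0$, then $\pp$ is not a pole of $s$ and Lemma~\ref{le:order}(1)--(2) applies. When $\ord_{\pp}R(s)=0$ we get $\ord_{\pp}R(w-u)\equiv 0\bmod q$, and by Lemma~\ref{le:order}(2) this happens exactly when $\ord_{\pp}(w-u-c)=0$ for all $c$, i.e.\ exactly when condition~\eqref{cond:2} holds; note \eqref{cond:4} then fails because every summand $n(c)\ord_{\pp}(w-u-c)$ is $0$. When $\ord_{\pp}R(s)\neq 0$, Lemma~\ref{le:order}(1) provides a unique $c_0\in{\tt R}$ with $\ord_{\pp}(w-u-c_0)>0$ (hence $\neq 0$), all the other $\ord_{\pp}(w-u-c)$ being $0$, and $\ord_{\pp}R(w-u)=n(c_0)\ord_{\pp}(w-u-c_0)$; hence $\ord_{\pp}R(w-u)\equiv 0\bmod q$ iff $n(c_0)\ord_{\pp}(w-u-c_0)\equiv 0\bmod q$, which is~\eqref{cond:1}, and $\ord_{\pp}R(w-u)\not\equiv 0\bmod q$ iff $n(c_0)\ord_{\pp}(w-u-c_0)\not\equiv 0\bmod q$, which (only $c_0$ can witness it) is~\eqref{cond:4}. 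This finishes the non-pole case.

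If $\ord_{\pp}w<0$, then Lemma~\ref{le:order}(3) gives $\ord_{\pp}R(w-u)=\ord_{\pp}w\,(\deg A-\deg B)$. Because $q$ is prime and, by hypothesis, $q\nmid\deg A-\deg B$, this shows $\ord_{\pp}R(w-u)\equiv 0\bmod q$ iff $q\mid\ord_{\pp}w$ and $\ord_{\pp}R(w-u)\not\equiv 0\bmod q$ iff $q\nmid\ord_{\pp}w$. Since $\ord_{\pp}w=\ord_{\pp}(w-u-c)$ for every $c\in{\tt R}$ and ${\tt R}\neq\emptyset$ (otherwise $A$ and $B$ are both constant, forcing $\deg A-\deg B=0$), ``$q\mid\ord_{\pp}w$'' is equivalent to~\eqref{cond:3} and ``$q\nmid\ord_{\pp}w$'' to~\eqref{cond:5}. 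To conclude I would check that within each case the three conditions of part~(1) and the two of part~(2) are jointly exhaustive and cannot both hold, using that \eqref{cond:2} forces $\ord_{\pp}w\ge 0$ while \eqref{cond:3} and \eqref{cond:5} carry the clause $\ord_{\pp}w<0$, so that the two biconditionals hold as stated across all primes $\pp$.

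The computation is almost entirely bookkeeping, and the hypothesis $\deg A-\deg B\not\equiv 0\bmod q$ does the real work: it is what forces the order of $R(w-u)$ at a pole of $w$ to be governed by $\ord_{\pp}w$ modulo $q$ rather than by the individual exponents $n_i,j_i$. The one step I expect to demand genuine care is precisely the pole case --- verifying that the conditions \eqref{cond:1}--\eqref{cond:3}, which (unlike \eqref{cond:4}--\eqref{cond:5}) do not explicitly restrict the sign of $\ord_{\pp}w$, interact correctly when $\pp$ is a pole of $w$; everything else follows mechanically from Lemma~\ref{le:order} together with Remarks~\ref{rem:const} and~\ref{rem:minorder}.
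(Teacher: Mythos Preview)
Your proposal is correct and follows essentially the same route as the paper: set $s=w-u$, split according to whether $\pp$ is a pole of $s$ (equivalently of $w$), invoke Lemma~\ref{le:order} in each case, and use the hypothesis $q\nmid\deg A-\deg B$ to handle the pole case. If anything, your write-up is slightly more careful than the paper's, since you explicitly note (and the paper does not) that the converse direction for condition~\eqref{cond:1} requires checking the pole sub-case separately.
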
%
 \begin{proof}%
 \begin{enumerate}
 \item Set $s=w-u$.  If $\ord_{\pp} s \geq 0$ (and thus $\ord_{\pp} w\geq 0$) and  $\ord_{\pp}R(s)=0$, then by Lemma \ref{le:order} we have that $\forall c \in {\tt R}: \ord_{\pp}(s-c) =0 \equiv 0 \mod q$.  If, alternatively,  $\ord_{\pp} s \geq 0$ (and thus again $\ord_{\pp} w\geq 0$),  while $\ord_{\pp}R(s)\not =0$,  and $\ord_{\pp}R(s) \equiv 0 \mod q$, then by Lemma \ref{le:order} again we have that
\[
\exists c \in {\tt R}: n(c)\ord_{\pp}(s-c) =\ord_{\pp}R(s)\equiv 0 \mod q.
\]
Finally if $\ord_{\pp}s <0$ and $\ord_{\pp}R(s) \equiv 0 \mod q$, then by Lemma \ref{le:order} once more, we have that
\[
(\deg A - \deg B)\ord_{\pp}s =\ord_{\pp}R(s) \equiv 0 \mod q.
\]
Given our assumption that  $\deg A(T) -\deg B(T) \not \equiv 0 \mod q$, we conclude that $\ord_{\pp}s \equiv 0 \mod q$ and thus for any $c \in \tt R$ we have $n(c)\ord_{\pp}(s-c) =n(c) \ord_{\pp}s \equiv 0 \mod q$.

Conversely, if   \eqref{cond:1} holds, then by Lemma \ref{le:order} such a $c$ is unique, and
\[
\ord_{\pp}R(s)=n(c)\ord_{\pp}(s-c) \equiv 0 \mod q.
\]
Further, clearly if Condition \eqref{cond:2} holds, we have that  $\ord_{\pp}R(s)=0$.  Finally, if $\ord_{\pp}s < 0$ and $\exists c \in {\tt R}: [\ord_{\pp}(w-u-c) \equiv 0 \mod q ]$, then $\ord_{\pp}s \equiv 0 \mod q$ and therefore $\ord_{\pp}R(s) \equiv 0 \mod q$.

\item First of all observe that if $\forall c \in {\tt R}: [n(c)\ord_{\pp}(s-c) \equiv 0 \mod q]$, then certainly $$\ord_{\pp}R(s) \equiv  0 \mod q.$$  At the same time,   if \eqref{cond:4} holds, then for the specified $c \in \tt R$ we have that $\ord_{\pp}(s-c) >0$ and by Lemma \ref{le:order}, $$\ord_{\pp}R(s) = n(c) \ord_{\pp}(s-c) \not \equiv 0 \mod q.$$ Finally, if \eqref{cond:5} holds, then $$\ord_{\pp}s=\ord_{\pp}(s-c) \not \equiv 0 \mod q$$ and thus $$\ord_{\pp}R(s) =(\deg A - \deg B)\ord_{\pp}s \not \equiv 0 \mod q.$$
\end{enumerate}
 \end{proof}%

\begin{lemma}%
\label{le:canfind}%
 Let $G, C, R(T), A(T), B(T), q$ be as in Lemma \ref{le:notq}.  Assume also that $\deg A > \deg B$.     Let $a \in G \setminus C$.  If $\calT$ is  a finite set
of primes of $G$ containing a pole $\ttt$ of $a$, then there exists $b \in G \setminus C$ such that  $\ord_{\ttt}R(b) \not \equiv 0 \mod q$, while for all $\pp \in \calT$ we have that $$\ord_{\pp}(R(a)^q -R(b)) \equiv 0 \mod q$$ and $$\ord_{\pp}(R(a)^q -R^{-1}(b)) \equiv 0 \mod q.$$
\end{lemma}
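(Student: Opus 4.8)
The plan is to build the element $b$ by prescribing its local behaviour at the finitely many primes of $\calT$ using the Weak Approximation Theorem. The guiding observation is that $R(a)^q$, being a $q$-th power, satisfies $\ord_\pp R(a)^q = q\,\ord_\pp R(a) \equiv 0 \bmod q$ at every prime $\pp$ of $G$; so, by the ultrametric behaviour recorded in Remark~\ref{rem:minorder}, to keep the orders of $R(a)^q - R(b)$ and $R(a)^q - R^{-1}(b)$ divisible by $q$ it suffices to arrange that the orders of $R(b)^{\pm 1}$ at the primes of $\calT$ are divisible by $q$ and disjoint from $\{v_\pp,-v_\pp\}$, where $v_\pp := \ord_\pp R(a)^q$ — or strictly larger in absolute value, so that $R(a)^q$ dominates the difference.

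Write $d = \deg A - \deg B$; by hypothesis $d > 0$, and (as in Lemma~\ref{le:notq}) $q \notdiv d$. Fix a root $c^{*}$ of $A$, which exists since $\deg A \ge 1$, so that in the notation of Lemma~\ref{le:order} we have $n(c^{*}) > 0$. For each $\pp\in\calT$ the integer $v_\pp = \ord_\pp R(a)^q$ is well defined (because $a\notin C$ forces $R(a)\in G^{*}$) and divisible by $q$. Using the Weak Approximation Theorem — the orders at distinct primes being pairwise inequivalent discrete valuations of $G$, each surjecting onto $\Z$ — I would choose $b\in G$ with $\ord_\ttt b = -1$ and, for every $\pp\in\calT\setminus\{\ttt\}$, $\ord_\pp(b - c^{*}) = q m_\pp$, where $m_\pp$ is a positive integer large enough that $n(c^{*})\,q\,m_\pp > |v_\pp|$. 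Concretely, one approximates $b$ to high order at each prime of $\calT$ by an element realizing the prescribed order (say $b\equiv a_\ttt$ to order $\ge 0$ at $\ttt$ with $\ord_\ttt a_\ttt = -1$, and $b\equiv c^{*}+\pi_\pp$ to order $> q m_\pp$ with $\ord_\pp\pi_\pp = q m_\pp$), after which Remark~\ref{rem:minorder} pins the orders of $b$ and of $b-c^{*}$ to the required values; note $b\notin C$ since it has a pole at $\ttt$.

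It then remains to verify the conclusion one prime at a time. At $\ttt$: since $\ttt$ is a pole of $b$, the third statement of Lemma~\ref{le:order} gives $\ord_\ttt R(b) = (\ord_\ttt b)\,d = -d \not\equiv 0 \bmod q$, which is the first asserted property. Since $\ttt$ is also a pole of $a$, the same statement gives $\ord_\ttt R(a) = (\ord_\ttt a)\,d < 0$, hence $v_\ttt = q(\ord_\ttt a)\,d \le -2d < -d = \ord_\ttt R(b)$ and $v_\ttt < 0 < d = \ord_\ttt R^{-1}(b)$; so by Remark~\ref{rem:minorder} both $\ord_\ttt(R(a)^q - R(b))$ and $\ord_\ttt(R(a)^q - R^{-1}(b))$ equal $v_\ttt$, which is divisible by $q$. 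At a prime $\pp\in\calT\setminus\{\ttt\}$: here $\pp$ is not a pole of $b$, $\ord_\pp(b-c^{*}) = q m_\pp > 0$, and $\ord_\pp(b-c) = \ord_\pp(c^{*}-c) = 0$ for every other element $c$ of the finite set ${\tt R}$ of roots and poles (Remark~\ref{rem:const}); so the first statement of Lemma~\ref{le:order} gives $\ord_\pp R(b) = n(c^{*})\,q\,m_\pp$ and hence $\ord_\pp R^{-1}(b) = -n(c^{*})\,q\,m_\pp$. By the choice of $m_\pp$ both of these differ from $v_\pp$, so Remark~\ref{rem:minorder} yields $\ord_\pp(R(a)^q - R(b)) = \min(v_\pp, n(c^{*})qm_\pp) = v_\pp$ and $\ord_\pp(R(a)^q - R^{-1}(b)) = \min(v_\pp, -n(c^{*})qm_\pp) = -n(c^{*})qm_\pp$, both divisible by $q$.

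The delicate point — the main obstacle to watch — is the behaviour at $\ttt$, where the order of $R(b)$ itself must be kept off a multiple of $q$ while the orders of the two differences must land on a multiple of $q$. This is reconciled precisely because $\ttt$ is a pole of $a$: then $R(a)^q$ has a pole at $\ttt$ of order $q(\ord_\ttt a)d$, far more negative than the order $\mp d$ of $R(b)^{\pm 1}$, so by the ultrametric inequality $R(a)^q$ alone governs the order of each difference there. Everything else is a routine combination of weak approximation with the order computations of Lemma~\ref{le:order}.
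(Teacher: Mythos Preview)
Your argument is correct and follows essentially the same route as the paper: use Weak Approximation to prescribe the local behaviour of $b$ at the primes in $\calT$, invoke Lemma~\ref{le:order} to compute $\ord_\pp R(b)$, and then let the ultrametric inequality (Remark~\ref{rem:minorder}) identify the order of each difference with that of the dominant term. The only tactical difference is at the primes $\pp\in\calT\setminus\{\ttt\}$: the paper gives $b$ a deep pole there (of order divisible by $q$, so $R(b)$ has very negative order $\equiv 0\bmod q$ and dominates $R(a)^q$ in the first difference, while $R(a)^q$ dominates $R^{-1}(b)$ in the second), whereas you push $b$ close to a fixed root $c^{*}$ of $A$ (so $R(b)$ acquires a deep \emph{zero} of order divisible by $q$, and the roles of dominator are swapped) --- either choice works equally well.
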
%
\begin{proof}%
 By the Weak Approximation Theorem we can find $b \in G \setminus C$ such that $\ord_{\ttt}b=-1$,  and for all other $\qq \in \calT$ we have that $$|\ord_{\qq}b| > q|\ord_{\pp}R(a))|,$$ $$\ord_{\qq}b <0,$$ and  $$\ord_{\qq}b \equiv 0 \mod q.$$   Thus, given our assumptions on degrees of $A(T)$ and $B(T)$, by Lemma \ref{le:order} we have that $$\ord_{\ttt}R(b)=(\deg A - \deg B)\ord_{\ttt}b \not \equiv 0 \mod q$$ and $$\ord_{\ttt}R(b) <0.$$  At the same time observe that $$\ord_{\ttt}(R(a))^q =q(\deg A - \deg B)\ord_{\ttt}a  \equiv 0 \mod q$$ and $$\ord_{\ttt}(R(a)^q) < \ord_{\ttt}R(b) <0.$$   Therefore, $$\ord_{\ttt}(R(a)^q- R(b)) = \ord_{\ttt}R(a)^q \equiv 0 \mod q.$$ Further, for any $\qq \in \calT \setminus \{\ttt\}$, by Lemma \ref{le:order} again, we have that $\ord_{\qq}R(b) < q\ord_{\qq}R(a)$ and $\ord_{\qq}R(b) \equiv 0 \mod q$.  Consequently,   $$\ord_{\qq}(R(a)^q- R(b)) = \ord_{\qq}R(b) \equiv 0 \mod q.$$  At the same time,  for all $\pp \in \calT$ we have that $$\ord_{\pp}R^{-1}(b) > q\ord_{\pp}R(a),$$  and thus $$\ord_{\pp}(R(a)^q-R^{-1}(b)) = \ord_{\pp}R(a)^q  \equiv 0 \mod q.$$
\end{proof}%

\begin{lemma}%
\label{le:forevery}
 Let $G, C$ be as in Lemma \ref{le:order}.  Let $q$ be a rational prime different from the characteristic of $G$.   Let $w \in G$. Let $v
\in C$ be such that both $v- w$ and   $v- w^{-1}$ are $q$-th powers in $G$.  Then for any prime $\pp$ of $G$ we have that $\ord_{\pp}w
\equiv 0 \mod q$.
\end{lemma}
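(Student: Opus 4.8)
The plan is to fix an arbitrary prime $\pp$ of $G$ and prove $\ord_{\pp}w \equiv 0 \mod q$ directly, by a short case analysis according to the sign of $\ord_{\pp}w$. Since $w^{-1}$ occurs in the hypotheses we have $w \neq 0$, so $\ord_{\pp}w$ is a finite integer; if $\ord_{\pp}w = 0$ there is nothing to prove, and it remains to treat the cases $\ord_{\pp}w < 0$ and $\ord_{\pp}w > 0$.

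In the case $\ord_{\pp}w < 0$ I would invoke the first hypothesis and write $v - w = z^{q}$ for some $z \in G$. Because $v \in C$, Remark \ref{rem:const} gives $\ord_{\pp}v \geq 0 > \ord_{\pp}w$ (with the convention $\ord_{\pp}v = \infty$ when $v = 0$), so the sharp form of Remark \ref{rem:minorder} yields $\ord_{\pp}(v - w) = \ord_{\pp}w$. Hence $\ord_{\pp}w = \ord_{\pp}(z^{q}) = q\,\ord_{\pp}z \equiv 0 \mod q$.

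In the case $\ord_{\pp}w > 0$ I would instead use the second hypothesis and write $v - w^{-1} = z^{q}$ for some $z \in G$. Now $\ord_{\pp}(w^{-1}) = -\ord_{\pp}w < 0 \leq \ord_{\pp}v$, so again Remark \ref{rem:const} and the sharp form of Remark \ref{rem:minorder} give $\ord_{\pp}(v - w^{-1}) = \ord_{\pp}(w^{-1}) = -\ord_{\pp}w$, and therefore $-\ord_{\pp}w = q\,\ord_{\pp}z \equiv 0 \mod q$, i.e.\ $\ord_{\pp}w \equiv 0 \mod q$. Since $\pp$ was arbitrary, the lemma follows.

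I do not expect any real obstacle: the argument is a two-line valuation computation. The only points that need a word of care are the degenerate possibilities $v - w = 0$ or $v - w^{-1} = 0$, that is, $w$ equal to the constant $v$ (respectively $v^{-1}$); in either case $\ord_{\pp}w = 0$ by Remark \ref{rem:const} and the conclusion is immediate, and the convention $\ord_{\pp}0 = \infty$ makes $0$ a $q$-th power, so these cases are consistent with the hypotheses.
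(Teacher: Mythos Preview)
Your proof is correct and follows essentially the same valuation argument as the paper: split according to whether $\pp$ is a pole or a zero of $w$ (or neither), use that $\ord_{\pp}v\geq 0$ for the constant $v$, and read off divisibility by $q$ from the $q$-th power hypothesis. The only cosmetic difference is that the paper first disposes of the case $w\in C$ separately before treating poles and zeros, whereas you fold that into the case $\ord_{\pp}w=0$; the content is the same.
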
%
\begin{proof}%
If $w \in C$ then    for any prime $\pp$ of $G$ we have that $\ord_{\pp}w=\ord_{\pp}w^{-1}=0\equiv 0 \mod q$.  If $w \in G\setminus C$ then   for any
prime $\pp$ of $G$ which is a pole of $w$  we have that $$\ord_{\pp}(v+w)=\ord_{\pp}w \equiv 0 \mod q.$$    Similarly     for any
prime $\pp$ of $G$ which is a zero of $w$  we have that $$\ord_{\pp}(v+w^{-1})=\ord_{\pp}w^{-1} \equiv 0 \mod q.$$
\end{proof}%

\section{Definability Construction}%
\label{sec:construction}
\setcounter{equation}{0}%
In this section we carry out the construction of our field.  We construct a field whose transcendence degree is one over the given field but the construction is easily extended to any positive transcendence degre.  Without loss of generality we assume that we have countably many sets to define.
\begin{notationassumption}%
\label{not:1}%
Below we will use the following notation and assumptions. %
\begin{itemize}%
\item Let $F$ be a countable field. Let $\{A_u, u \in \Z_{>0}\}$ be a sequence of countable subsets of $F$.%

\item Let $M$ be a countable field of transcendence degree at least one over $F$.  Let $t \in M$ be transcendental over $F$.%
\item For each $u\in \Z_{>0}$, let $R_u(S) \in F(S)$. Assume further that all the zeros and poles of $R_u(S)$ are in $F$ and at least one zero is
of order not divisible by $q_u$. This implies of course that at least one pole is of order not divisible by $q_u$. Without loss of generality we can
assume $$R_u(S) = \frac{C_u(S)}{B_u(S)},$$ where $C_u(S), B_u(S) \in F[S]$ and are relatively prime in $F[S]$. Assume also $$C_u(S) =
\prod_{i=1}^{m_u}(S-c_{u,i})^{n_{u,i}}$$ and $$B_u(S) = \prod_{i=1}^{k_u}(S-b_{u,i})^{j_{u,i}}$$ with $c_{u,i},b_{u,i} \in F$ and are all distinct, while
all $n_{u,i}, j_{u,i}$ are positive integers. Assume additionally $n_{u,1} \not \equiv 0 \mod q_u$ and $$0< \deg C_u(S) - \deg B_u(S) \not \equiv 0 \mod
q_u.$$  Let $${\tt R_u} = \{c_{u,1},\ldots,c_{u,m_u}, b_{u,1}, \ldots, b_{u,k_u}\}.$$  For $c \in {\tt R}_u$ let $n(c) = n_{u,i}$ if $c =a_{u,i}$, and  let $n(c) = j_{u,i}$ if $c=b_{u,i}$.
\item Let $\{q_u, u \in \Z_{>0}\}$ be a sequence of distinct prime numbers not equal to the characteristic of $M$. Assume also that $M$ contains
$q_u$-th roots of all its elements of the form $R_u(x)$ for any $x \in M$, for all $u \in \Z_{>0}$.%
\item Let $a_u =c_{u,1}$.  In other words, $a_u$ is a zero of  $R_u(T)$ with a multiplicity not divisible by $q_u$.
\item For all $u \in \Z_{>0}$ for any finite set $B$,  the complement of the set
\begin{equation}
\label{it:one}
\{x \in F: x =a+b, \mbox{ where } a \in A_u, b \in B \}
\end{equation} in $F$ is infinite.
\item Let $f_u(W,S)=W^{q_u}-R_u(S), u \in \Z_{>0}$.
\item Let $x \in M$.  Then by $\sqrt[q_u]{x}, u \in \Z_{>0}$ we will mean an element $y \in M$ such that $y^{q_u}=x$.
\item For any field $H$ such that $F\subset H$ and any $u \in \Z_{>0}$, let
\[%
A_{u,H}=\{z \in H: \exists t \in H \mbox{ such that } f_u(t,z)=0\}=
\]%
\[%
\{z \in H: \exists g \in H \mbox{ such that } g^{q_u}=R_u(z)\}=\{z \in H: R_u(z) \in H^{q_u}\}
\]%
\end{itemize}%
\end{notationassumption}

\begin{remark}
\label{rem:assumptions}
Observe that if Condition \eqref{it:one} does not hold, then  $A_u$ can be trivially defined over $F$, and since we construct a definition of $F$, we will cover these cases also.

\end{remark}

The main result of this and the following section is the theorem below.

\begin{theorem}%
\label{thm:main}
There exists a field $K \subset M$  of infinite
algebraic degree over $F(t)$ such that each $A_u, u \in \Z_{>0}$ is (first-order) definable over $K$.
\end{theorem}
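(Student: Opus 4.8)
The plan is to build $K$ as the union of an increasing chain
\[
F(t)=K_0\subseteq K_1\subseteq K_2\subseteq\cdots\subseteq M
\]
of one-variable function fields over the constant field $F$, where every step $K_{n+1}/K_n$ is a Kummer-type extension $K_n\bigl(\sqrt[q_u]{R_u(w)}\bigr)$ for a suitable index $u$ and suitable $w\in K_n$. Such $q_u$-th roots live \emph{inside} $M$, since $M$ contains a $q_u$-th root of every element of the form $R_u(x)$, $x\in M$; and by Lemma \ref{le:qthroot} a genuinely new step has degree exactly $q_u$. As $F$ (hence $M$) is countable, this is a countable recursion. The aim is that, in the limit, each $A_u$ is defined over $K$ by a formula built from conditions ``$R_u(z+t)\in K^{q_u}$'' together with the $q_u$-divisibility configuration isolated in Lemma \ref{le:forevery} --- the precise translate and the precise auxiliary conjunct being exactly what the technical preliminaries are designed to support. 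Thus $K$ must satisfy, for each $u$, a \emph{positive} family of demands forcing the points of $A_u$ to satisfy the defining formula, a \emph{negative} family keeping every other element of $K$ from satisfying it, and enough nontrivial steps to make $[K:F(t)]$ infinite.

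Concretely, I would fix a dovetailed enumeration of all demands --- the positive ones indexed by $(u,a)$ with $a\in A_u$; the negative ones indexed by $(u,z)$ with $z$ an element of some earlier $K_m$ and $z\notin A_u$ (the dovetailing guarantees every element of the eventual $K$ is covered); and, should $\bigcup_uA_u$ be finite, a stock of ``filler'' demands --- and at stage $n$ attend to one. For a positive demand $(u,a)$: if the relevant $R_u$-value is already a $q_u$-th power in $K_n$, do nothing; otherwise adjoin the $q_u$-th root(s) that make the formula hold at $a$. By Corollary \ref{le:ramification} each such adjunction ramifies exactly over the finitely many primes of $K_n$ that are zeros or poles of the adjoined value of order prime to $q_u$ --- a nonempty set, as the zero coming from $a_u=c_{u,1}$ has order $n_{u,1}\not\equiv0\bmod q_u$ --- and I record those primes. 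For a negative demand $(u,z)$: using the order computations of Lemmas \ref{le:order} and \ref{le:notq} and the weak-approximation device of Lemma \ref{le:canfind}, produce --- after a controlled auxiliary $q_u$-th-root extension if needed --- a prime $\pp$ of the current field, of degree at least two over $F$, with $\ord_{\pp}R_u(z+t)\not\equiv0\bmod q_u$, and \emph{protect} $\pp$: require that no later step ramify at it. By Remark \ref{rem:multe} and Lemma \ref{le:ef}, once such a witness exists and $\pp$ is never ramified again, $R_u(z+t)$ can never become a $q_u$-th power anywhere in the chain, so the demand is met for good. Filler demands are met by a controlled $q_u$-th-root adjunction at a \emph{fresh} parameter $c\in F$ supplied by hypothesis \eqref{it:one} (there are always infinitely many elements of $F$ outside $A_u+B$ for the current finite ``used'' set $B$); thereafter $c$ is handled exactly like a negative demand so as not to enter the definable set.

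For the limit $K=\bigcup_nK_n$: each positive demand was attended to, so every required $R_u$-value is a $q_u$-th power in $K$; each element of $K$ lies in some $K_m$ and each negative demand was attended to, so no unintended element satisfies the defining formula; the filler (or positive) demands force $[K:F(t)]=\infty$; and $F$ stays the exact field of constants, since every nontrivial $K_{n+1}/K_n$ is ramified and of prime degree, hence geometric. Consequently each $A_u$ is defined over $K$ --- after removing, by a quantifier-free formula, the finitely many exceptional points such as $c_{u,i}-t$ and $b_{u,i}-t$ --- which is the claim. The passage to arbitrary positive transcendence degree is by iterating the construction along a transcendence basis, as indicated in the text.

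The main obstacle is the bookkeeping that renders all three families of demands simultaneously satisfiable: at each stage the element whose $q_u$-th root is adjoined must be chosen so that the primes it newly ramifies miss every prime currently protected for a negative or filler demand and so that no positive demand already secured is spoiled, while one retains enough freedom to manufacture fresh witness primes for the infinitely many new elements spawned at earlier stages --- a priority-style argument whose combinatorial core is exactly the weak-approximation estimates of Lemmas \ref{le:canfind} and \ref{le:forevery} and the divisibility analysis of Lemmas \ref{le:order} and \ref{le:notq}. It matters here that $F$ is only countable: the primes endangered by positive and filler steps all lie over degree-one primes $t=\alpha$ with $\alpha\in F$, of which there are only countably many, so witness primes must --- and can --- be taken among the infinitely many primes of degree at least two, which are never endangered. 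One must also ensure that each intended step genuinely raises the degree (the adjoined $R_u$-value is not already a $q_u$-th power) and that the filler steps do not enlarge any $A_{v,K}$ in a way the defining formulas can see; both are controlled by the same invariants.
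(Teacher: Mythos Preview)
Your high-level architecture --- a tower $F(t)=K_0\subseteq K_1\subseteq\cdots$ of Kummer steps inside $M$, with a priority argument balancing positive demands (force certain $R_u$-values into $K^{q_u}$) against negative demands (protect a witness prime of order $\not\equiv 0\bmod q_u$) --- is indeed the paper's, but the conflict-resolution device you propose has a real gap. You want witness primes of degree $\geq 2$, arguing that positive steps only endanger primes lying over degree-one places $t=\alpha$ of $F(t)$. The trouble is the negative demands with $z\in F\setminus A_u$: for constant $z$, every zero and pole of $R_u(z+t)$ in $F(t)$ already sits over a degree-one place $t=c-z$, $c\in{\tt R}_u$, so there is no degree-$\geq 2$ witness available; an auxiliary extension can change ramification indices but cannot move the support of the divisor off those places. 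And these are exactly the primes that a positive step for $a\in A_u$ with $a-z\in{\tt R}_u-{\tt R}_u$ \emph{must} ramify. With the single-variable test ``$R_u(z+t)\in K^{q_u}$'' the positive and negative demands inside $F$ are thus in direct, unremovable conflict, and no amount of priority bookkeeping repairs this.

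The paper gets around it not by cleverer scheduling but by changing the defining formula. It first makes $F$ itself definable over $K$ (formula \eqref{def:F}; this is what step $4n+2$ of the construction and Lemmas \ref{le:canfind}--\ref{le:forevery} are really for, used in a more elaborate way than your sketch suggests), and then, working inside the now-definable $F$, uses the \emph{pair} formula \eqref{def:A_U}: $r\in A_u$ iff for every decomposition $r=r_1+r_2$ in $F$, at least one of $t^{q_u}-r_j+a_u$ lies in $A_{u,K}$. The disjunction is the missing idea. The maintained invariant \eqref{eq:even} says that for every such pair one side already has order $\equiv 0\bmod q_u$ at every protected prime, so that side can be pushed into $K^{q_u}$ (step $4n+1$) without ramifying any witness; conversely step $4n+3$ manufactures, for each $r\notin A_u$, a single pair with both sides placed in the forbidden set $S_u$. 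The specific translate $t^{q_u}-r+a_u$ (not $z+t$) and the finiteness hypothesis \eqref{it:one} on the $A_u$'s are precisely what make the order computations in step $4n+3$ close up. Your outline contains neither the separate definition of $F$ nor the pair trick, and without them the construction cannot handle the constant-field negative demands.
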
%
The proof is contained in the construction below.
\begin{construction}
We construct  $K$ to be an algebraic extension of $F(t)$ such that the following conditions are satisfied.
\be%
\item $F$ is definable over $K$ by a  first-order formula, in particular
\begin{equation}
\label{def:F}
F=\{a \in K| \forall b \in M: ((R_1(a)^{q_1}+R_1(b)) \in K^{q_1} \land  (R_1(a)^{q_1} + R^{-1}_1(b)) \in K^{q_1} ) \Rightarrow b \in A_{1,K})\}.
\end{equation}
\item For each $u \in \Z_{>0}$ we have that $A_u$ is definable by the first order formula over $K$ and in particular
\begin{equation}
\label{def:A_U}
A_u = \{r \in F| \forall r_1, r_2 \in F:(r_1 \not = r_2 \land r_1 + r_2 = r \Rightarrow ((t^{q_u}-r_1+a_u) \in A_{u,K} \mbox{ or } (t^{q_u}-r_2+a_u) \in A_{u,K}))\}
\end{equation}
\ee%
We will arrange for $K= \bigcup_{i=0}^{\infty} E_i$, where $F(t)=E_0 \subset E_1 \subset \ldots \subset M$ and $E_{i+1}/E_i$ is a finite algebraic extension. We will also construct finite sets $S_{i,u}, u \in \Z_{>0}$ contained in $E_i \cap (M\setminus (A_{K,u}\cup F))$. We set  $\emptyset = S_{0,u} \subset S_{1,u} \ldots$ and make sure that $$K \setminus (A_{K,u} \cup F)= ( \bigcup_{i \in \Z_{>0}}S_{i,u}).$$ In other words, all non-constant $K$-elements outside $A_{K,u}$ are contained in the union of $S_{i,u}$'s.

To satisfy the conditions above, we will require the following to be true at each step of our construction:
\begin{equation}
\label{eq:odd}%
\begin{array}{c}
\mbox{  For every }i, u \mbox{ and every }s \in S_{i,u}, \mbox{  there exists a prime } \pp_{s,i,u} \mbox{
of }E_i \\ \mbox{ such that } \ord_{\pp_{s,i,u}}R_u(s)  \not \equiv 0 \mod q_u.%
\end{array}%
\end{equation}%

\begin{equation}%
\label{eq:even}%
\forall u \in \Z_{>0} \forall r_1, r_2 \in F, r_1r_2\not=0: \mbox{ if } r_1 + r_2 \in A_u, \mbox{ then }
 \end{equation}
 \[%
\forall  s \in S_{i,u}: [ \ord_{\pp_{s,i,u}} R_u(t^{q_u} -r_1+a_u) \equiv 0 \mod q_{u}] \mbox{  or } \forall  s \in S_{i,u}: [\ord_{\pp_{s,i,u}} R_u(t^{q_u}-r_2+a_u) \equiv 0 \mod q_u].
\]%

We proceed by induction.  Let $\{x_i\}$ be a sequence of elements of $M$ algebraic over $F(t)$ such that every element appears infinitely
often.  Note that for $E_0=F(t), S_{0,u} =\emptyset, u \in \Z_{>0}$, Conditions \eqref{eq:odd} and \eqref{eq:even} are vacuously satisfied.

Assume now $(E_i, S_{i,u}, u \in \Z_{>0})$ have been constructed already and consider four cases below.\\

\noindent
$i=4n$:
If no $q_u$ divides $[E_i(x_n):E_i]$ then set $(E_{i+1},S_{i+1,u}) = (E_i(x_i),S_{i,u}), u \in \Z_{>0}$.%

We show that Condition (\ref{eq:odd}) holds for $(E_{i+1}, S_{i+1,u}, u \in\Z_{>0})$. Since $S_{i+1,u} = S_{i,u}$, by
induction it is enough to show that every prime of $E_i$ will have at least one factor in $E_{i+1}$
with a ramification degree not divisible by any $q_u$.  This follows from Lemma \ref{cor:ef2}. Thus for every $u$ and every $s \in S_{i+1,u}$ we can set $\pp_{i+1,u, s}$ to be a factor of $\pp_{i,u,s}$ with a ramification degree not divisible by $q_u$.   Further, since $S_{i,u}= S_{i+1,u}$, Condition  (\ref{eq:even}) carries over automatically by Remark \ref{rem:multe}.\\

\noindent%
$i=4n+1$:%
\begin{itemize}%
\item[(a)] If $x_n \not \in E_i$ then set   $(E_{i+1},S_{i+1,u}, u \in \Z_{>0}) = (E_i,S_{i,u}, u \in \Z_{>0})$ with $\pp_{i+1,u,s} =\pp_{i,u,s}$.
\item[(b)]   If $x_n \in E_i$, then follow the steps below.
\be%
\item  For each $u =1,\ldots,n$ do the following: \\
if for some $s \in S_{i,u}$ we have that $\ord_{\pp_{s,i,u}}R_u(x_n) \not \equiv 0 \mod q_u,$ then set $$(E_{i+1,u},S_{i+1,u}) = (E_i,S_{i,u}\cup\{x_n\});$$
  if for all $s \in S_{i,u}$ we have that $\ord_{\pp_{i,u,s}} R(x_n) \equiv 0 \mod q_u,$ and $R_u(x_n) \not \in A_{E_i,u}$ (or in other words $R_u(x_n)$ is not a $q_u$-th power in $E_i$), then  set $$(E_{i+1,u},S_{i+1,u}) = (E_i(\sqrt[q_u]{R_u(x_n)}),S_{i,u});$$
  if $R_u(x_n)  \in A_{E_i,u}$ (or in other words $R_u(x_n)$ is a $q_u$-th power in $E_i$), then set $$(E_{i+1,u},S_{i+1,u}) = (E_i,S_{i,u}).$$
\item For all $u \in \Z_{>n}$ set $$(E_{i+1,u},S_{i+1,u}) = (E_i,S_{i,u}).$$
\item Set $E_{i+1} = \prod_{i=1}^nE_{i+1,u}$ (by the product of fields we mean the compositum of fields, i.e. the smallest subfield of $M$ containing all the fields in the product).
\item For all $u \in \Z_{>0}$ and all $s \in S_{i+1,u}$ set $\pp_{i+1,u,s}$ equal to any factor of $\pp_{i,u,s}$ in $E_{i+1}$  with ramification degree not divisible by $q_u$.
\ee
\end{itemize}
We claim that all the parts of this step can be executed and  Conditions (\ref{eq:even}) and (\ref{eq:odd}) hold after this step.  First of all note that $[E_{i+1} :E_i]=\prod_{E_{i+1,u} \not = E_i}q_u$ by Corollary \ref{cor:totdegree}, and therefore for all $u$  such that $E_{i+1,u}=E_i$, and for all $s \in S_{i,u}$, every $\pp_{i,u,s}$ has a factor in $E_{i+1}$ with ramification degree not divisible by $q_u$ by Corollary \ref{cor:ef}.  Now let $u \in \{1,\ldots, n\}$ be such that $E_{i+1,u}=E_i(\sqrt[q_u]{R_u(x_n)})$ and note that we can separate the extension $E_{i+1}/E_i$ into a tower of two extensions: $$E_i \subseteq X_u \subset E_{i+1},$$ where $X_u=\prod_{j \not =u,E_{i+1,j} \not=E_i} E_{i+1,j}$. Observe that by Corollary   \ref{cor:totdegree}  again, the degree of the first extension is  equal to  $$\prod_{j \not =u,E_{i+1,j} \not=E_i}q_j$$ and every prime   corresponding to elements of $S_{i,u}$ will have a factor in  $X_u$ of ramification degree not divisible by $q_u$.  Finally, by Lemma \ref{le:notramified}, no factor of $\pp_{i,s,u}$ in  $X_u$ will be ramified in $E_{i+1}$.  Thus, by Lemma \ref{le:prodram}, we know that $\pp_{i,s,u}$ will have a factor in $E_{i+1}$ with ramification degree not divisible by $q_u$.  Therefore Condition \eqref{eq:odd} will still be satisfied after this step.  Since for every $u \in \Z_{>0}$ and every $s \in S_{i+1,u}$, the prime $\pp_{i+1,u,s}$ lies above a prime  $\pp_{i,u,s} \in S_{i,u}$, we have that Condition \eqref{eq:even} is satisfied after this step by induction hypothesis and Lemma \ref{cor:ef2}.\\

\noindent $i=4n+2$:
\be%
\item $x_n \not \in E_i$ or $x_n \in F$.  In this case $(E_{i+1},S_{i+1,u}, u \in \Z_{>0}) = (E_i,S_{i,u},u \in \Z_{>0})$.
\item  $x_n \in E_i \setminus F$. Let $\pp$ be a pole of $x_n$. This pole exists by Remark \ref{rem:const}.
For each $1 \leq u \leq n$ do the following.
\begin{enumerate}
\item Check to see if Condition (\ref{eq:even}) is satisfied for the set of primes $$\calA=\{p_{s,i,u}: s\in S_{i,u}\} \cup \{\pp\}.$$ If Condition (\ref{eq:even}) is not
satisfied, then for some $r_1, r_2 \in F$ with $r_1+r_2 \in A_u$ and $r_1r_2 \not = 0$ we have that for some $\qq \in \calA$, it is the case that $\ord_{\qq}(R_u(t^{q_u}-r_1+a_u)) \not \equiv 0 \mod q_u$  and for some $\ttt \in \calA$, it is the case that $\ord_{\ttt} (R(t^{q_u}-r_2+a_u)) \not \equiv 0 \mod q_u$. Since Condition (\ref{eq:even}) was previously satisfied for $\{\pp_{s,i,u}, s \in S_i\}$, we conclude that for some $r \not = 0$ we have that $\ord_{\pp}R(t^{q_u}-r+a_u) \not \equiv 0 \mod q_u$ and for all $\pp_{s,i,u}, s \in S_{i,u}$, $$\ord_{\pp_{s,i,u}}R(t^{q_u}-r+a_u) \equiv 0 \mod q_u.$$ We note that there can be only finitely many  $r \in F$ with such a property. Indeed, $\pp$ cannot be a pole of $t$ as otherwise $\ord_{\pp}(R(t^{q_u}-r+a_u)) \equiv 0 \mod q_u$ by Lemma \ref{le:notq}.  Thus, $$\ord_{\pp}(t^{q_u} - r-c+a_u) >0$$ for some $c \in {\tt R}_u$ by Lemma \ref{le:notq} again.   Similarly if the above condition held for some $\bar r \in F$ we would have  $\ord_{\pp}(t^{q_u} - \bar r-\bar c+a_u) >0$ for some $\bar c \in {\tt R}_u$.    Consequently,  $\ord_{\pp}(r- \bar r + c - \bar c)>0$.  The last inequality can hold only if  $r-\bar r +c - \bar c=0$.  However, $c - \bar c$ can take finitely many values only and therefore the set   $$V_{i,u, \pp} = \{r \in F: \ord_{\pp}R_u(t^{q_u}-r+a_u) \not \equiv 0 \mod q_u\land \forall s \in S_{i,u}, \ord_{\pp_{i,u,s}}R_u(t^{q_u}-r+a_u) \equiv 0 \mod q_u\}$$ is finite.

 Let $$E_{i,u}=E_i(\sqrt[q_u]{R(t^q-r+a_u)}, r \in V_{i,u,\pp}),$$ and observe that in the extension $E_{i,u}/E_i$ the prime $\pp$ will be ramified by Corollary \ref{le:ramification} while $$[E_{i,u}:E_i]=q_u^b, b \geq 1$$ by Lemma \ref{le:qthroot}.  Let $\bar \pp$ be a prime above $\pp$ in $E_{i,u}$.   At the same time, observe that  no $\pp_{i,u,s}, s \in S_{i,u}$ will be ramified in $E_{i,u}$ by Lemma \ref{le:ramification}.  Let $\bar \pp_{i,u,s}$ be an $E_{i,u}$-prime above $\pp_{i,u,s}$.
\item  By Lemma  \ref{le:canfind} there exists $b_u \in E_{i,u}$ such that
\be%
\item $ \ord_{\bar \pp}(R_{u}(x_n)^{q_u} + R_u(b_u)), \ord_{\bar\pp_{s,i,u}}(R_u(x_n)^{q_u}+ R_u(b_u)), s \in S_{i,u}$ are all divisible by $q_u$, and%
\item $\exists \qq \in \{\bar \pp, \bar \pp_{i,s,u}, s \in S_{i,u}\}$ such that $\ord_{\qq} R_u(b_u) =n_{u,1} \not \equiv 0 \mod q_u $.
\ee%
\item Define $(E_{i+1,u},S_{i+1,u})=(E_{i,u}( \sqrt[q_u]{R_u(x_n^{q_u})+R_u(b_u)}), S_{i,u}\cup \{b_u\}).$  Observe that $$[E_{i+1,u}:E_{i,u}]=q_u^b, b=0,1.$$  Further, no prime from the set $\{\bar \pp, \bar \pp_{i,s,u}, s \in S_{i,u}\}$ is ramified in this extension by Lemma \ref{le:ramification}.  Let $\hat \pp$ lie above $\bar \pp$ in $E_{i+1,u}$ and for every $u=1,\ldots,n$ and every $s \in S_{i,u}$, let $\hat \pp_{i,s,u}$ lie above $\bar \pp_{i,s,u}$ in $E_{i+1,u}$
\ee
\item Set $E_{i+1}=\prod_{u=1}^{n}E_{i+1,u}.$  Note that all the prime factors of $[E_{i+1}:E_i]$ are in the set $\{q_1,\ldots,q_n\}$ by Corollary \ref{cor:totdegree} and no prime from the set $\{\hat \pp, \hat \pp_{i,s,u}, s \in S_{i,u}\}$ is ramified in the extension $E_{i+1}/E_{i+1,u}$ for any $u=1,\ldots, n$ by Lemma \ref{le:ramification} again.  Therefore for every $u =1,\ldots,n$ we can let $\pp_{i+1,u,s}$  be any factor of $\hat \pp_{i,u,s}$ in $E_{i+1}$.  Observe that for all $u=1,\ldots,n$ and all $s \in S_{i,u}$ we have that $$e(\pp_{i+1,u,s}/\pp_{i,u,s})=e(\pp_{i+1,u,s}/\hat \pp_{i,u,s})e(\hat \pp_{i,u,s}/\bar \pp_{i,u,s})=e(\bar \pp_{i,u,s}/\hat\pp_{i,u,s})=1$$ and therefore for any $u=1,\ldots,n$ and any $s \in S_{i,u}$ we still have that $$\ord_{\pp_{i+1,s,u}}s \not \equiv 0 \mod q_u.$$ Next let $\pp_{i+1,u,b_u}$ be any factor of $\hat \pp$ in $E_{i+1}$ and observe that $$e(\pp_{i+1,u,b_u}/\bar \pp) =e(\pp_{i+1,u,b_u}/\hat \pp)e(\hat \pp/\bar \pp)=1$$ and therefore $\ord_{\pp_{i+1,u,b_u}}b_u=\ord_{\bar \pp}b_u \not \equiv 0 \mod q$.

    For every $u \in \Z_{>n}$ set $S_{i+1,u}=S_{i,u}$.  For any $s \in S_{i,u}$ let $\pp_{i+1,u,s}$  be any factor $\pp_{i,u,s}$ in $E_{i+1}$ with ramification degree of $E_i$ not divisible by $q_u$.  (As before such a factor exists by Corollary \ref{cor:ef2}.)  Given this choice we now have for all $u >n$ and all $s \in S_{i,u}=S_{i+1,u}$ that $\ord_{\pp_{i+1,u,s}}s \not \equiv 0 \mod q_u$ by Remark \ref{rem:multe}
\ee%

We claim that  Conditions \eqref{eq:even} and \eqref{eq:odd} are satisfied after this step.  If $x_n \not \in E_i$ or $x_n \in F$, there is nothing to prove.    So assume that we are in the case of $x_n \in E_i \setminus F$.  From the discussion above it is clear that Condition \eqref{eq:odd} is satisfied.   Next we note that $E_{i,u}$ was constructed explicitly so that Condition \eqref{eq:even} held in $E_{i,u}$ for all primes in $\calA$.   Note that each $\pp_{i+1,u,s}, s \in S_{i+1,u}$ is a factor of a prime in $\calA$.  Thus, by Remark \ref{rem:multe}, Condition \eqref{eq:even} is still satisfied in $E_{i+1}$ for $u\leq n$.\\  At the same time for $u>n$ Condition \ref{eq:even} is satisfied by induction  and Remark \ref{rem:multe} again, since $S_{i,u}=S_{i+1,u}$.

$i=4n+3$:
For each $u=1,\ldots,n$ do the following.
\begin{enumerate}
\item If $x_n \in A_u$ or if $x_n \not \in F$, then $(E_{i+1,u}, S_{i+1,u})=(E_i,S_i)$.
\item  Assume $x_n \in F \setminus A_u$.  Let  $\bar \pp_r$ be a prime of $F(t)$ corresponding to $t-\sqrt[q_u]{r}$ if $r$ is a $q_u$-th power in $F$ or to $t^{q_u}-r$ if $r$ is not a $q_u$-th power in $F$, and let $\pp_r$ be any $E_i$-prime lying above $\bar \pp_r$.  Note that for all $r \not = 0$ we have that  $\ord_{\bar \pp_r}(t^{q_u}-r)=1$ and for all but finitely many $r \in F$ we also have that $\ord_{\pp_r}(t^{q_u}-r)=1$.  (It will be bigger than 1 only if $\bar \pp_r$ ramifies in the extension $E_i/F(t)$ and there are only finitely many ramified primes.) If  $c \in {\tt R}_u$ then
    \begin{equation}
    \label{eq:depends}
   \ord_{\pp_r}(t^{q_u}-r+a_u - c) = \left \{ \begin{array}{c} e(\pp_r/\bar \pp_r)n_{1,u}  \mbox{ if  } c = a_u = c_{1,u} \\ \ord_{\pp_r}(a_u - c) = 0 \mbox{ if } c \not = a_u =c_{1,u} \end{array} \right .
\end{equation}
Thus
\begin{align}
\ord_{\pp_r}R_u(t^{q_u}-r+a_u) =\\
 \left ( \sum_{z=1}^{k_u}n_{z,u}\ord_{\pp_r}(t^{q_u}-r+a_u-c_{z,u}) \right ) - \left ( \sum_{z=1}^{m_u}j_{z,u}\ord_{\pp_r}(t^{q_u}-r+a_u-b_{z,u}) \right )= \\
 e(\pp_r/\bar \pp_r)n_{1,u} \not \equiv 0 \mod q_u
\end{align}
for all but finitely many primes, and the  set $$C_u=\{r \in F^*| \ord_{\pp_r}R_u(t^{q_u}-r+a_u) \equiv 0 \mod q_u\}$$ is a finite set.  Next let $B_u$ be the union of the two sets below:
\[
 \{ g \in F:  \exists s \in S_{i,u} \ord_{\pp_{i,u,s}} R_u(t^{q_u}-g+a_u)  \not \equiv 0 \mod q\}
 \]
 \[
 \{g \in F: \ord_{\pp_{r_j}}R_u(t^{q_u}-g+a_u) \not \equiv 0 \mod q, j=1,2\},
\]
 and observe that $B_u$ is a finite set.  Indeed, for any prime $\pp$ of $E_i$ the set of $g \in F$ such  that $\ord_{\pp}R_u(t^{q_u}-g+a_u) \not \equiv 0 \mod q$ is finite.  By Lemma \ref{le:order}, $\pp$ cannot be a pole of $t$ since in that case $\ord_{\pp}R_u(t^{q_u}-g+a_u) \equiv 0 \mod q$.  Therefore by the same lemma  we have that $\ord_{\pp}(t^{q_u}-g+a_u -c) >0$ for exactly one $c \in \tt R_u$.  But for some unique  $a_{\pp} \in F$ we have that $\ord_{\pp}(t-a) >0$ and therefore we conclude that $a_{\pp}^{q_u}-g +a_u-c=0$, giving us a unique value of $g$.

Let $r_1 \in F$ be such that  the following conditions are satisfied:
\begin{eqnarray}
\label{eq:r1} r_1 \not \in C_u,\\
\label{eq:r2} r_1 \not = x_n -x, \mbox{ where } x \in C_u,\\
\label{eq:leftover} r_1 \not \in A_u -a_u + \tt R_u - B_u.
\end{eqnarray}
Next let $r_2= x_n - r_1$.  By assumption (see Notation and Assumptions \ref{not:1}), infinitely many elements of $F$ satisfy  \eqref{eq:leftover}, and only finitely many elements of $F$ can fail to satisfy  \eqref{eq:r1} and \eqref{eq:r2}. Thus we can certainly choose $r_1$ to satisfy \eqref{eq:r1} -- \eqref{eq:leftover}.

Set $E_{i+1}=E_i$,  set $S_{i+1,u}=S_{i,u}\cup \{R_u(t^{q_u}-r_1+a_u), R(t^{q_u}-r_2+a_u)\}$ for $u =1,\ldots,n$  and $S_{i+1,u} = S_{i,u}$ for $u>n$. Since $r_1, r_2 \not \in C_u$, we have that $\ord_{\pp_{r_i}}R_u(t^{q_u}-r_i+a_u) \not \equiv 0 \mod q_u$ and therefore Condition \eqref{eq:odd} is satisfied.

Suppose now that Condition \eqref{eq:even} is not satisfied for some $u$ and for some $\bar r_1 + \bar r_2 \in A_u$.
Since  Condition \eqref{eq:even} was satisfied before this step we can assume without loss of generality that $\ord_{\pp_{i,u,s}}R_u(t^{q_u}- \bar r_1+a_u) \equiv 0 \mod q_u$ for all $s \in S_{i,u}$ and $$\ord_{\pp_{r_1}}R(t^{q_u}-\bar r_1+a_u) \not \equiv 0 \mod q_u.$$  Since we know $\pp_{r_1}$ not to be a pole of $t$, we know that $\ord_{\pp_{r_1}}(t^{q_u}-\bar r_1+a_u-c) >0$ for some $c \in {\tt R}_u$ by Lemma \ref{le:notq}.  But  we also have that    $\ord_{\pp_{r_1}}(t^{q_u}-r_1) >0$ and therefore $\ord_{\pp_{r_1}}(r_1-\bar r_1+a_u-c )>0$ implying that
\begin{equation}
\label{eq:rel1}
r_1=\bar r_1-a_u +c_1
\end{equation}
by Remark \ref{rem:const}.  Thus we have that $r_1+\bar r_2=a -a_u+c$, where $a \in A_u$ and $$\bar r_2 =a -a_u+c-r_1 \not \in B_u,$$ by assumption on $r_1$, implying that $$\ord_{\pp_{i,u,s}}R_u(t^{q_u}- \bar r_2+a_u) \equiv 0 \mod q_u$$ for all $s \in S_{i,u}$ and $$\ord_{\pp_{r_1}}R(t^{q_u}-\bar r_1+a_u) \equiv 0 \mod q_u.$$  Consequently, Condition \eqref{eq:even} is still satisfied after these steps.
\end{enumerate}%
\end{construction}
\begin{remark}%
Step $4n$ can be omitted without changing the definability properties of $K$.
\end{remark}%

\section{Properties of $K$.}
\setcounter{equation}{0}
In this section we complete the proof of Theorem \ref{thm:main} by proving that the constructed field $K$ has the desired properties.
\begin{theorem}
\label{thm:properties}
\be%
\item For all $u \in \Z_{>0}$ we have that $K \setminus A_{u,K}=\bigcup_{i \in \Z_>0}S_{i,u}$.
\item Formula \eqref{def:F} holds over $K$.
\item  Formula \eqref{def:A_U} holds over $K$.
\item If $H \subset M$ is such that $K \subseteq H$ and $[H:K]<\infty$, then $[H:K]=1$ or $[H:K] \equiv 0 \mod q_u$ for some $u$.
\ee
\end{theorem}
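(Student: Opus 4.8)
The plan is to establish the four parts in order, with part (1) feeding into parts (2) and (3). For part (1): the inclusion $\bigcup_iS_{i,u}\subseteq K\setminus A_{u,K}$ is immediate, since each $s\in S_{i,u}$ lies in $E_i\subseteq K$ and, by Condition \eqref{eq:odd} (restored at every later stage of the construction), every $E_j$ with $j\ge i$ has a prime $\pp$ with $\ord_\pp R_u(s)\not\equiv 0\bmod q_u$; were $R_u(s)$ a $q_u$-th power in $K=\bigcup_jE_j$ it would already be one in some $E_j$, forcing $\ord_\pp R_u(s)\equiv 0\bmod q_u$. For the reverse inclusion, given $s\in K\setminus A_{u,K}$ I would pick $n$ with $x_n=s$, $n\ge u$, and $4n$ past the stage at which $s$ first enters an $E_i$; at step $i=4n+1$(b) the element $s$ is processed for this $u$, and since $R_u(s)\notin K^{q_u}\supseteq E_i^{q_u}$ the third subcase cannot occur and the second subcase would put $R_u(s)$ into $K^{q_u}$, so the first subcase applies and $s$ enters $S_{i+1,u}$. (The same step applied to $x_n\in F$ shows in passing that $F\subseteq A_{u,K}$ for all $u$.)

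Part (4) I would treat by a degree-stabilization argument. Suppose $K\subsetneq H\subseteq M$ with $[H:K]=d>1$ and $q_u\nmid d$ for every $u$, and pick $\alpha\in H\setminus K$; then $d':=[K(\alpha):K]$ divides $d$, so $d'>1$ and no $q_u$ divides $d'$. The minimal polynomial of $\alpha$ over $K$ has its finitely many coefficients in some $E_{i_0}$, hence is already irreducible over $E_i$ for $i\ge i_0$, so $[E_i(\alpha):E_i]=d'$ there. Choosing $n$ with $x_n=\alpha$ and $4n\ge i_0$, step $i=4n$ adjoins $\alpha$ (since $d'$ has no factor among the $q_u$), whence $\alpha\in E_{i+1}\subseteq K$ — contradiction.

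For the two definability formulas I would prove each containment. Part (2), $F\subseteq\text{RHS}$: for $a\in F$ set $v=R_1(a)^{q_1}$, a constant of $K$; if $b$ makes $v+R_1(b)$ and $v+R_1^{-1}(b)$ lie in $K^{q_1}$, then $R_1(b)\in K$, $b$ is algebraic over $F(t)$, and Lemma \ref{le:forevery} applied inside a sufficiently large $E_\ell$ (with $w=-R_1(b)$ and $v$ in the constant field) yields $\ord_\pp R_1(b)\equiv 0\bmod q_1$ at every prime of $E_\ell$; by part (1) and Condition \eqref{eq:odd} this is incompatible with $b\in\bigcup_iS_{i,1}=K\setminus A_{1,K}$, so $b\in A_{1,K}$ (in particular $b\in K$, which is why the quantifier may range over the ambient $M$). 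Part (2), $\text{RHS}\subseteq F$: this is what step $i=4n+2$ is built for — for non-constant $a=x_n$ it produces, via Lemma \ref{le:canfind}, an element $b_1\in S_{i+1,1}$, hence $b_1\notin A_{1,K}$ by part (1), with $v+R_1(b_1)$ and $v+R_1^{-1}(b_1)$ both forced into $K^{q_1}$, so $a$ fails \eqref{def:F}. Part (3), $A_u\subseteq\text{RHS}$: by Condition \eqref{eq:even}, if $r\in A_u$, $r=r_1+r_2$ with $r_1\ne r_2$, and both $t^{q_u}-r_j+a_u$ lay outside $A_{u,K}$, then by part (1) both lie in some $S_{i,u}$ with their own witnessing primes, but \eqref{eq:even} forces, for one index $j$, that the order of $R_u(t^{q_u}-r_j+a_u)$ be $\equiv 0\bmod q_u$ along the family $\{\pp_{s,i,u}\}_{s\in S_{i,u}}$, in particular at the prime attached to $s=t^{q_u}-r_j+a_u$ itself — a contradiction (the degenerate case $r_1r_2=0$ needing a small separate check). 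Part (3), $\text{RHS}\subseteq A_u$: contrapositively, step $i=4n+3$ handles $r\in F\setminus A_u$, choosing $r_1$ satisfying \eqref{eq:r1}--\eqref{eq:leftover}, setting $r_2=r-r_1$, and inserting $t^{q_u}-r_1+a_u$ and $t^{q_u}-r_2+a_u$ into the sets $S_{\cdot,u}$, so neither is in $A_{u,K}$; the quantifier ``$\forall r_1,r_2\in F$'' and the test ``$\in A_{u,K}$'' are first-order over $K$ precisely because $F$ is, by part (2).

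I expect the main obstacle to be the $\text{RHS}\subseteq F$ direction of part (2): one must verify that every $a\in K\setminus F$ — not only non-constant ones, but also any constants of $K$ outside $F$ and any poles of $R_1$ — really does acquire a defeating witness $b\notin A_{1,K}$ with $v+R_1(b),\,v+R_1^{-1}(b)\in K^{q_1}$, which is exactly what the careful choice of $b_u$ in step $4n+2$ (resting on the two simultaneous order congruences of Lemma \ref{le:canfind}) is engineered to guarantee, and why the quantifier in \eqref{def:F} is allowed to range over the ambient $M$ rather than $K$ alone. Everything else reduces to the bookkeeping already carried out in the construction: that each of the four case-types preserves \eqref{eq:odd} and \eqref{eq:even}, and that the witnessing primes $\pp_{s,i,u}$ descend to factors of $q_u$-free ramification, which follows from Corollaries \ref{cor:ef}, \ref{cor:ef2} and \ref{le:ramification} together with Lemma \ref{le:prodram}.
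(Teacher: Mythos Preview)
Your proposal is correct and follows essentially the same approach as the paper: parts (1), (2), (3), (4) are argued via the four construction step-types $4n{+}1$, $4n{+}2$, $4n{+}3$, and $4n$ respectively, invoking Conditions \eqref{eq:odd}--\eqref{eq:even} and (implicitly in the paper, explicitly in your write-up) Lemma \ref{le:forevery} at the same points. Your flagged obstacle---verifying that the witness $b_1$ from step $4n{+}2$ satisfies the $R_1^{-1}(b_1)$ half of the hypothesis in \eqref{def:F}---is exactly where the paper's own proof is tersest, and your parenthetical about the quantifier ranging over $M$ is unnecessary once the formula is read (as it must be for first-order definability) with $b$ ranging over $K$.
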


\begin{proof}%
\be
\item Let $S_u = \bigcup_{i \in \Z_{>0}}S_{i,u}$. Suppose $x \in S_u$ and $x \in A_{K,u}$. Then for some $i$ we have that $x $ is a
 in $A_{E_i,u}$ while $x \in S_{i,u}$. But this would contradict Condition \eqref{eq:odd} requiring that for some prime
$\pp_i$ of $E_i$ we have that $\ord_{\pp_i}R_u(x) \not \equiv 0 \mod q_u$.

Suppose $x \in K$ and $x \not \in A_{K,u}$.  Let $n$ be such that $x$ was added to $E_i$ for some $i <n$ and $x = x_n$.  In Step
$4n+1$ we check if for some $s \in S_{i,u}$ it is the case that $\ord_{\pp_{s,i,u}}R_u(x) \not \equiv 0 \mod q_u$.  if such an $s$ is found
$x$ is added to $S_{i,u}$ and thus $x \in S_u$.  Suppose no $s$ with $\ord_{\pp_{s,i,u}}x \not \equiv 0 \mod q_u$ was found.  In this case
we add $\sqrt[q_u]{R_u(x)}$ to $E_i$ and $x$ becomes an element of $A_{K,u}$ contradicting our assumption.  Thus, $x \in
S_u$.%

\item Let $x \in K$ and assume that in some $E_i$ containing $x$ the divisor of $R_1(x)$ is a $q_1$-th power of another divisor. Then $R_1(x)$
is a $q_1$-th power in $K$.   Indeed, let $n$ be such that $x \in E_i$ for some $i < n$ and $x_n =x$.
Then since the order of all zeros and primes of $R_1(x)$ is divisible by $q_1$,  in the step $4n+1$ we have that   $\sqrt[q_1]{R_1(x)}$ is added to $E_n$.\\

Suppose now that $a \not \in F$. Then by step $4n+2$ of the construction there exists $b \in K$ such that $R_1(b)$ is not a $q_1$-th power but $R_1(b)+R_1(a)^{q_1}$ is a $q_1$-th power.

Let $a \in F$, let $b \in K$ satisfying $R_1(a)^{q_1} +R_1(b) \in K^{q_1} \land R_1(a)^{q_1} +\frac{1}{R_1(b)} \in K^{q_1}$ Then in some $E_i$ we have that the
order at all the poles and zeros of $R_1(b)$ is divisible by $q_1$. Hence by the argument above, $R_1(b)$ is a $q_1$-th power in $K$.

\item Let $u$ be a positive integer.  Suppose $r \in F$ and  $r \not \in A_u$.  Then by Part $4n+3$ of the construction  $\exists r_1, r_2 \in F$ with  $r_1 r_2 \not = 0$ such that $(t^{q_u}-r_1+a_u)$ and $ (t^{q_u}-r_2+a_u)$  are in $S_u$ and  therefore cannot be in $A_{K,u}$ by Part 1 of this proof.  Suppose now that $r \in A_u$, $r_1, r_2 \in F, r_1r_2  \not = 0, r_1+r_2 = r$ but $(t^{q_u}-r_1+a_u \not \in A_{K,u})\land (t^{q_u} -r_2+a_u \not \in A_{K,u})$.  In this case $(t^{q_u}-r_1 + a_u\in S_{i,u}) \land(t^{q_u} -r_2 +a_u\in  S_{i,u})$ for sufficiently large $i$ by Part 1 of this proof.  But this contradicts  Condition \eqref{eq:even}.

\item Suppose there exist an element $\alpha \in M$ algebraic over $K$ and of degree $n$ such that $(n,q_u) = 1$ for all $u$.  If $a_0,\ldots,a_{n-1} \in K$ are the coefficients of  the  monic irreducible polynomial of $\alpha$ over $K$ and  $E_i$ is such that $D_0,\ldots,D_{n-1}$, then $\alpha$ is of degree not divisible by any $q_u$ over any $E_j$ with $j \geq i$.  In this case, however, $\alpha$ would have been added to some $E_j$ in a step $4n$ for some $n$.
\ee%
\end{proof}%

\section{Undecidability of Theories}%
\label{sec:undecidability}
\setcounter{equation}{0}%
In this section we prove Theorem \ref{introthm2} by specializing the Section \ref{sec:construction}.  Let $U, \calQ, \calR, \calP$ be defined as in the statement of this theorem. By Proposition \ref{introprop}  we can assume that there exists
\[
G \models \mathfrak T_{U, \calQ, \calR, \calP}
\]
of transcendence degree at least one over $U$,  if $U$ is not algebraic over a finite field and of transcendence degree at least 2 if $U$ is algebraic over a finite field.   For each $q$ implement the construction above with $M=G$, $F=U$, if $U$ is not algebraic over a finite field, and $F=U(t)$ with $t$ transcendental over $U$, if $U$ is algebraic over a finite field.  Further let $q_1=q$,  $A= \Z$ if the characteristic is 0, and $A=\F_p[t]$ if the characteristic is $p>0$.

 The constructed field $K=K_q$ will satisfy the following conditions:
\be
\item  $A$ is definable over $K_q$.
\item Any non-trivial finite extension of $K_q$ in $G$ is of degree divisible by $q$.
\ee
Now let $\Omega$ be any non-principal ultra-filter and let $L = \prod_{q}K_q/\Omega$ and let $\hat G = \prod_{q \in \calQ}G/\Omega$.  By a standard argument using {\L}o\u{s}'s Theorem, we have that $L$ is algebraically closed in $\hat G$, while $\hat G$ is a model of $\Tt_{U,\calQ,\calR,\calP}$.  Thus we have that $L$ is also a model of $\Tt_{U,\calQ,\calR,\calP}$ and consequently any finite subtheory of this theory has at least one $K_q$ as its model, making the subtheory undecidable.

Below are some examples of theories covered by our theorem.
\begin{example}
A theory $\Tt_{U, \calR, \calP}$, where $\calQ$ consists of all the primes equivalent to 3 mod 5,  $R_q(T)=T$,  $P_i(T)=T^{q_i} -3$, with $q_i$ a prime not in $\calQ$, and $U$ is the field obtained from $\Q$ by adjoining all the $q^n$-th  roots of all integers for all $q \in \calQ$ and all $n \in \Z_{>0}$.
\end{example}
\begin{example}
  A theory of a field of any characteristic not equal to a prime number $p$, where all the polynomials of degree not divisible by $p$ have a root in the field and some polynomials of degree $p$ do not have a root. Here we also let $R_q(T)=T$, for $q\not = p$.
\end{example}

\section{Some Open Questions}
One of the motivations of this paper was our interest in finding new finitely hereditarily undecidable theories, in particular in cases where the original theory is decidable.  In this connection we have the following questions.
\be
\item It is known that the theory of the field of all totally real algebraic numbers is decidable (see \cite{Fried3}).  Is this theory  finitely hereditarily undecidable?
\item Is the theory of pseudo real closed fields (PRC fields) finitely hereditarily undecidable?
\item In general, if ${\mathfrak T}$ is a theory of any subfield of $\bar \Q$, the algebraic closure of $\Q$, is ${\mathfrak T}$ finitely hereditarily undecidable?  (We know that this is true for number fields.)
\ee

\end{document}